\newcounter{mynotes}
\declaretheorem[within=section]{theorem}
\declaretheorem[sibling=theorem]{corollary}
\declaretheorem[sibling=theorem]{lemma}
\declaretheorem[sibling=theorem]{claim}
\declaretheorem[sibling=theorem]{definition}
\declaretheorem[sibling=theorem]{Lemma+Definition}
\declaretheorem[sibling=theorem]{remark}
\declaretheorem[sibling=theorem]{conjecture}
\crefname{proposition}{Proposition}{Propositions}
\crefname{conjecture}{Conjecture}{Conjectures}
\crefname{claim}{Claim}{Claims}
\crefname{remark}{Remark}{Remarks}
\newcounter{termcounter}
\renewcommand{\thetermcounter}{\Alph{termcounter}}
\crefname{term}{term}{terms}
\def\term{\@ifnextchar[\term@optarg\term@noarg}
\def\term@optarg[#1]#2{%
  \textup{(#1)}%
  \def\@currentlabel{#1}%
  \def\cref@currentlabel{[][2147483647][]#1}%
  \cref@label[term]{#2}}
\def\term@noarg#1{%
  \refstepcounter{termcounter}%
  \textup{(\thetermcounter)}%
  \cref@label[term]{#1}}
\newcommand{\ignore}[1]{}
\definecolor{DSred}{rgb}{1,0,0}
\renewcommand{\leq}{\leqslant}
\renewcommand{\ge}{\geqslant}
\renewcommand{\le}{\leqslant}
\renewcommand{\epsilon}{\varepsilon}
\newcommand{\C}{\mathbb{C}}
\newcommand{\Z}{\mathbb{Z}}
\newcommand{\vect}[1]{\mathbf{#1}}
\renewcommand{\[}{\begin{equation}}
\renewcommand{\]}{\end{equation}}
\newcommand{\Int}{\mathrm{Int}}
\newcommand{\restate}[2]{\medskip
\noindent{\bf #1 (restated).}{\sl #2}}
\definecolor{Blue}{rgb}{0,0,1}
\def\a{{\vect{a}}}
\def\b{{\vect{b}}}
\def\y{{\vect{y}}}
\def\z{{\vect{z}}}
\def\w{{\vect{w}}}
\def\Y{{\vect{Y}}}
\def\Z{{\vect{Z}}}
\begin{document}

\title{A characterization of functions with vanishing averages over products of disjoint sets}


\author{Hamed Hatami \thanks{McGill University. \texttt{hatami@cs.mcgill.ca}. Supported by an NSERC grant.} \and Pooya Hatami \thanks{University of Chicago. \texttt{pooya@cs.uchicago.edu}.} \and Yaqiao Li \thanks{McGill University. \texttt{yaqiao.li@mail.mcgill.ca}.}}

\maketitle

\begin{abstract}
Given  $\alpha_1,\ldots,\alpha_m \in (0,1)$, we characterize all integrable functions $f:[0,1]^m \to \C$ satisfying $\int_{A_1 \times \cdots \times A_m} f =0$ for any collection of disjoint sets $A_1,\ldots,A_m \subseteq [0,1]$ of respective measures $\alpha_1,\ldots,\alpha_m$. We use this characterization to settle some of the conjectures in [S. Janson and V. S\'os, More on quasi-random graphs, subgraph counts and graph limits, arXiv:1405.6808].
\end{abstract}

\noindent {{\sc AMS Subject Classification:} \quad 05C99}
\newline
{{\sc Keywords:} generalized Walsh expansion, graph limits, quasi-randomness.

\section{Introduction}
Answering a question of Janson and S\'os~\cite[Problem 4.5]{JansonSos}, given  $\alpha_1,\ldots,\alpha_m \in (0,1)$, we characterize all integrable functions $f:[0,1]^m \to \C$ that satisfy
\begin{equation}
\label{eq:mainCondition}
\int_{A_1 \times \cdots \times A_m} f =0
\end{equation}
for every collection of disjoint sets $A_1,\ldots,A_m \subseteq [0,1]$ where $A_1,\ldots,A_m$ are of respective measures $\alpha_1,\ldots,\alpha_m$.  While the question is very natural on its own, it also arises naturally in the study of certain quasi-random properties of graphs.  Indeed this was the original motivation of Janson and S\'os~\cite{JansonSos} for asking and studying this question.

Given a number $p \in (0,1)$, roughly speaking, a graph sequence $\{G_n\}_{n=1}^\infty$  is called $p$-quasi-random if, in the limit, it behaves similar to the sequence of Erd\"os-R\'enyi random graphs $G(|V(G_n)|,p)$. In the seminal works Thomason~\cite{MR930498,MR905280} and Chung, Graham and Wilson~\cite{MR1054011} suggested a rigorous definition of a quasi-random graph sequence, and made a curious observation that  many seemingly different definitions are equivalent, and thus lead to the same notion of quasi-randomness. One that turns out to be particularly useful is the following:

\begin{definition}
\label{def:qrandom}
A graph sequence  $\{G_n\}_{n=1}^\infty$   is $p$-quasi-random if and only if $|V(G_n)| \to \infty$ and $N(F,G_n)=(p^{|E(F)|}+o(1)) |V(G_n)|^{|V(F)|}$ for every graph $F$, where $N(F,G_n)$ denotes the number of \emph{labeled} copies of $F$ in $G_n$ as a subgraph (not necessarily induced).
\end{definition}

A graph sequence  $\{G_n\}_{n=1}^\infty$  is called \emph{convergent}~\cite{MR2274085} if  the normalized subgraph counts $N(F,G_n)/|V_n|^{|V(F)|}$ converge  for every graph $F$.  The limit of  a convergent graph sequence can be represented by a so called \emph{graphon}, which is a symmetric measurable function $W:[0,1]^2 \to [0,1]$. More precisely, given a convergent graph sequence $\{G_n\}_{n=1}^\infty$, there always exists a graphon $W:[0,1]^2 \to [0,1]$ such that for every integer $m>0$ and every graph $F$ with vertex set $\{1,\ldots,m\}$, we have
$$\lim_{n \to \infty} N(F,G_n)/| V(G_n)|^{|V(F)|} = \int_{[0,1]^m} \prod_{(i,j) \in E(F)} W(x_i,x_j) dx_1 dx_2 \cdots dx_m.$$
We denote the integral in the right-hand side by $t(F,W)$. Conversely, for every graphon $W$, one can construct a  graph sequence that converges to $W$ in the above sense. Note that every $p$-quasi-random graph  sequence converges to the constant graphon $W =  p$, where here and in the sequel when we say two functions are equal, we mean they are equal almost everywhere. Hence, often with a bit of work, one can translate various characterizations of $p$-quasi-random graph sequences to statements asserting that the constant graphon $p$ is the unique graphon that satisfies a certain condition. For example, Chung, Graham and Wilson~\cite{MR1054011} showed that  it suffices to require the condition of \cref{def:qrandom} only for two graphs $F=K_2$ and $F=C_4$. In the language of graph limits this corresponds to the fact that the graphon $W=p$ is the unique  graphon that satisfies $t(K_2,W)=p$ and $t(C_4,W)=p^4$.

It is not difficult to see that there is no single graph $F$ such that $t(W,F)=p^{|E(F)|}$ would imply $W=p$. As a substitute, Simonovits and S\'os~\cite{MR1645698} considered the hereditary versions of the subgraph counts, and showed that in fact for every fixed graph $F$, the condition $N(F,G_n[U])=p^{|E(F)|}+o\left(|V(G_n)|^{|V(F)|}\right)$ is satisfied for all subsets $U \subseteq V(G_n)$ if and only if the sequence $\{G_n\}_{n=1}^\infty$ is $p$-quasi-random. Here $G_n[U]$ denotes the subgraph of $G_n$ induced on $U$. In the language of graph limits this is equivalent to saying that given a graph $F$ with vertices $\{1,\ldots,m\}$, the graphon $W=p$ is the only graphon that satisfies
$$\int_{A^m} \prod_{(i,j) \in E(F)} W(x_i,x_j)  dx_1 \cdots dx_m = p^{|E(F)|} \lambda(A)^{m}$$
for all measurable $A \subseteq [0,1]$. Yuster~\cite{MR2676838} showed that given any $\alpha \in (0,1)$, it suffices to require this condition only for $A$ of measure $\alpha$. Shapira~\cite{MR2488748}, Yuster and Shapira~\cite{MR2591049}, and Janson and S\'os~\cite{JansonSos}  considered  the condition
\begin{equation}
\label{eq:generalP}
\int_{A_1 \times \cdots \times A_m} \prod_{(i,j) \in E(F)} W(x_i,x_j)  dx_1 \cdots dx_m = p^{|E(F)|} \prod_{i=1}^m  \alpha_i
\end{equation}
for all disjoint sets $A_1,\ldots,A_m \subseteq [m]$ of respective measures $\alpha_1,\ldots,\alpha_m$. They studied the question that for which   graphs $F$ with vertex set $\{1,\ldots,m\}$ and  sequences $\alpha_1,\ldots,\alpha_m \in (0,1)$ with $\sum_{i=1}^m \alpha_i \le 1$, the graphon $W = p$ is the unique graphon that satisfies \cref{eq:generalP}
for all disjoint sets $A_1,\ldots,A_m \subseteq [0,1]$ of respective measures $\alpha_1,\ldots,\alpha_m$. Following the notation of~\cite{JansonSos}, in this case, we say that $\mathcal{P}(F,\alpha_1,\ldots,\alpha_m)$ is a \emph{quasi-random property}.  Note that this  is equivalent to  \cref{eq:mainCondition} with
$$f=  \left(\prod_{(i,j) \in E(F)} W(x_i,x_j)\right)  - p^{|E(F)|},$$
and thus naturally raises the question~\cite[Problem 4.5]{JansonSos}  that which integrable functions $f:[0,1]^m \to \C$ satisfy \cref{eq:mainCondition} for all disjoint sets $A_1,\ldots,A_m \subseteq [0,1]$ of respective measures $\alpha_1,\ldots,\alpha_m$. We solve this problem in \cref{thm:mainthm}.

As an application of our  \cref{thm:mainthm}, in \cref{cor:quasiprop} and~\cref{thm:twins},  we recover  \cite[Theorem 2.11]{JansonSos}, and furthermore show that when $F$ contains twin vertices, $\mathcal{P}(F,\alpha_1,\ldots,\alpha_m)$ is a quasi-random property. The latter in particular answers \cite[Problem 2.19]{JansonSos} in the affirmative.

Finally as another application of our proof technique, in \cref{thm:symmetric} we solve \cite[Conjecture 9.4]{JansonSos} regarding symmetric functions.

\section{Notations and Preliminary Results} \label{sec:notation}
For every natural number $m$, denote $[m]  := \{1, \ldots, m\}$. Let $\lambda$ denote the Lebesgue measure on reals. For $x \in [0,1]^m$ and $S \subseteq [m]$, let $x_S \in [0,1]^S$ denote the restriction of $x$ to the coordinates in $S$.  For disjoint sets $S,T \subseteq [m]$, and $y \in [0,1]^S$ and $z \in [0,1]^T$, let $(y, z)$ denote the unique element  in $[0,1]^{S  \cup T}$ satisfying $(y,z)_S = y$ and $(y,z)_T =z$. For a vector $x \in [0,1]^m$ and an index $i \in [m]$, $x(i) \in [0,1]$ denotes the $i$-th entry of $x$.

For $S \subseteq [m]$, we denote by $\overline{S}:=[m] \setminus S$ the complement of $S$.
Given a function $f:[0,1]^m \to \C$ and $y \in [0,1]^S$, we define $f_y:[0,1]^{\overline{S}} \to \C$ by $f_y: z \mapsto  f(y,z)$ for every $z \in [0,1]^{\overline{S}}$. In the sequel, by an abuse of notation, we sometimes identify a function $f:[0,1]^S \to \C$ with its extension  to $[0,1]^m$  defined as $x \mapsto f(x_S)$ for $x \in [0,1]^m$.

We say that a function $f:[0,1]^m \to \C$ is an \emph{alternating function} with respect to the coordinates in $S \subseteq [m]$ if the   interchange of the values of any two coordinates in $S$ changes the sign of $f$.

Given $\alpha=(\alpha_1,\ldots,\alpha_m) \in [0,1]^m$ with $\sum_{i=1}^m \alpha_i=1$,  call a partition $A_1,\ldots,A_m$ of $[0,1]$ an \emph{$\alpha$-partition} if  $\lambda(A_i)=\alpha_i$ for $i=1,\ldots,m$ and  the boundary of each $A_i$ is of measure $0$. Given subsets $A_1,\ldots,A_m \subseteq [0,1]$ and $S  \subseteq [m]$, let $A_S$ denote  the product $\prod_{i \in S} A_i$.

For a positive integer $m$, let $S_m$ denote the symmetric group of order $m$.

\subsection{Generalized Walsh Expansion}
Our proofs of \cref{thm:mainthm} and \cref{thm:symmetric} use the so called generalized Walsh expansion, which was first defined by Hoeffding in~\cite{MR0026294} (See also~\cite{MR615434}).

\begin{definition}\label{dfn:generalizedwalsh}
The generalized Walsh expansion of  an integrable function $f:[0,1]^m \to \C$ is the expansion $f=\sum_{S\subseteq [m]} F_S$ that satisfies the following two properties.
\begin{itemize}
\item[(i)] For every $S\subseteq [m]$, the function $F_S$ depends only on the coordinates in $S$, i.e. $F_S(x)= F_S(x_S)$;
\item[(ii)] $\int_{[0,1]} F_S(x) dx_i =0,$ for every $S\subseteq [m]$ and every $i\in S$.
\end{itemize}
\end{definition}

We call a function $F_S$ satisfying \cref{dfn:generalizedwalsh}~(i) and (ii) a \emph{generalized Walsh function}. It is not difficult to see that the generalized Walsh expansion is unique and can be computed using the following formula
$$F_S(y) = \sum_{T \subseteq S} (-1)^{|S \setminus T|} \int_{[0,1]^{\overline{T}}} f(y_T,x_{\overline{T}}) dx_{\overline{T}}.$$
In the sequel, for the sake of brevity, we shall often drop the word ``generalized'' from the terms ``generalized Walsh expansion'' and ``generalized Walsh function''.

Given an integrable function $f:[0,1]^m \to \C$, for $0 \le k \le m$, we denote by $f^{\le k} := \sum_{S\subseteq [m], |S| \le k} F_S$ the projection of $f$ to the first $k$ ``levels''. The projections $f^{=k}$,  $f^{\ge k}$, $f^{< k}$, and $f^{> k}$ are defined similarly.

\section{Main Results}

We are now ready to state our results formally. We start by stating our main theorem that characterizes all integrable functions $f:[0,1]^m \to \C$ satisfying  \cref{eq:mainCondition} in the case where $\sum_{i=1}^m \alpha_i=1$. We handle the case $\sum_{i=1}^m \alpha_i<1$ as a consequence of this in \cref{cor:symCase}.

\begin{theorem}[Main Theorem] \label{thm:mainthm}
Let $\alpha_1,\ldots,\alpha_m \in (0,1)$ satisfy $\sum_{i=1}^m \alpha_i =1$. An integrable function  $f:[0,1]^m \to \C$ with the  Walsh expansion $f = \sum_{S \subseteq [m]} F_S$ satisfies
\begin{equation}
\label{eq:mainConditionRepeat}
\int_{A_1 \times \cdots \times A_m} f =0
\end{equation}
for all partitions of $[0,1]$ into disjoint sets $A_1,\ldots,A_m$ of respective measures $\alpha_1,\ldots,\alpha_m$ if and only if
\begin{itemize}
\item[(i)] $F_{\emptyset} = 0$;
\item[(ii)] $F_S$ is  an alternating function (with respect to the coordinates in $S$) for all $S \subseteq [m]$ with $|S| \ge 2$;
\item[(iii)]  For $S\subseteq [m]$, with $1\leq |S|\leq m-1$, and $\ell\in [m]\backslash S$, we have
\begin{equation}
\label{eqn:F_S-formula}
 \frac{1}{\prod_{i \in S} \alpha_i} F_S(x) = \sum_{i \in S} \frac{1}{\prod_{j \in S_i} \alpha_j} F_{S_i} (x^{(i)}),
\end{equation}
where $x^{(i)}$ is obtained from $x=(x_1,\ldots,x_m)$ by swapping  $x_\ell$ and $x_i$, and $S_i := S\cup \{\ell\} \setminus \{i\}$.
\end{itemize}

\end{theorem}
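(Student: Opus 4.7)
The plan is to reduce the integral condition to a statement about the Walsh components and then handle the two directions separately. First I would derive a clean expression for $\int_{A_1 \times \cdots \times A_m} f$ in terms of the $F_S$. Writing $\mathbf{1}_{A_i}(x_i) = \alpha_i + g_i(x_i)$ with $g_i := \mathbf{1}_{A_i} - \alpha_i$ (so $\int g_i = 0$), expanding the product $\prod_i (\alpha_i + g_i(x_i))$, substituting the Walsh decomposition $f = \sum_T F_T$, and repeatedly using the zero-marginal property of $F_T$ to kill every mismatched cross term, one obtains
\begin{equation*}
\int_{A_1 \times \cdots \times A_m} f \;=\; \sum_{S \subseteq [m]} \Paren{\prod_{j \notin S} \alpha_j} \int_{A_S} F_S(x_S)\, dx_S. \qquad (\star)
\end{equation*}
This identity is the anchor for both directions of the theorem.

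For the sufficiency direction, assuming (i), (ii), (iii), I would verify that the right-hand side of $(\star)$ vanishes for every $\alpha$-partition. Condition (i) kills the $S = \emptyset$ term. For the remaining terms, I would use the partition identity $\sum_k \mathbf{1}_{A_k} \equiv 1$ together with zero marginality to split each $\int_{A_S} F_S$ into a sum of ``twisted'' integrals in which one coordinate of $S$ is relocated to some other $A_k$. The alternating property (ii) forces every twisted integral whose target index $k$ already lies in $S$ to vanish (swapping the two coordinates that both map into $A_k$ changes the sign of $F_S$ but preserves the integration region), so only contributions with $k \notin S$ survive; the pointwise recursion (iii) then rewrites these in a level-preserving form that telescopes and cancels pairwise across the sum.

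For the necessity direction, I would extract each condition from the vanishing of $(\star)$ using carefully chosen test perturbations. Condition (i) follows by averaging $(\star)$ over random $\alpha$-partitions: the Walsh zero-marginal property forces $\E\brac{\int_{A_S} F_S} = 0$ for every $|S| \geq 1$, so the constant $F_\emptyset \prod_j \alpha_j$ must itself be zero. For (iii), I would fix a reference partition and perform an infinitesimal swap between $A_\ell$ and $A_i$: remove a measure-$\epsilon$ set concentrated near $b \in A_\ell$ and a measure-$\epsilon$ set concentrated near $c \in A_i$, then exchange them. Computing the first-order change of each $\int_{A_S} F_S$, grouping by which of $i$ and $\ell$ belong to $S$, and setting the total change to zero produces an integrated identity; letting the swap points $b, c$ vary and invoking Lebesgue density upgrades this to the pointwise recursion in (iii). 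The $|S| = 1$ case of (ii) is vacuous and is already encoded in (iii) (which forces all $F_{\{i\}}/\alpha_i$ to coincide as functions of one variable); for $|S| \geq 2$, I would combine (iii) with an additional pair-swap perturbation, or iterate (iii) to generate enough symmetry to conclude that $F_S$ flips sign under any transposition of its coordinates.

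The main obstacle is the first-order perturbation bookkeeping behind (iii): a single mass swap between $A_\ell$ and $A_i$ simultaneously picks up contributions from every Walsh component $F_S$ with $\{i, \ell\} \subseteq S$, $\{i,\ell\} \cap S = \{i\}$, $\{i,\ell\} \cap S = \{\ell\}$, or $\{i,\ell\} \cap S = \emptyset$, and these must be sorted and recombined so that Lebesgue density extracts exactly the pointwise identity claimed. A secondary concern is establishing (ii) at levels $|S| \geq 2$ in a way that cleanly complements (iii) rather than duplicating effort, which I expect will require a careful combinatorial argument invoking the uniqueness of the Walsh expansion.
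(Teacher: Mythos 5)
Your identity $(\star)$ is correct, and with it your sufficiency argument is essentially the paper's \cref{lemmma:integration-on-k-level-equal-zero} in different packaging: relocating one coordinate of $S$ via $\sum_k \mathbf{1}_{A_k}\equiv 1$, killing the $k\in S$ terms by alternation, and cancelling the rest via (iii) is exactly how the paper shows $\int_{A_{[m]}}f^{=k}=0$. The problem is the necessity direction, where your proposal stops precisely at the point where the real work lies. A \emph{single} first-order swap between $A_\ell$ and $A_i$ yields, after setting the derivative of $(\star)$ to zero, one identity in which the unknowns are \emph{partition-dependent integrated} quantities such as $\int_{A_{S\setminus\{i\}}}F_S(\cdot,b)$, summed over all $S$ meeting $\{i,\ell\}$ and over all levels $|S|\geq 1$ simultaneously. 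Lebesgue density in $(b,c)$ cannot disentangle this: it never separates the level-$k$ components from the other levels, and it never converts the integrals over the $A_j$'s into pointwise values of $F_S$, which is what \cref{eqn:F_S-formula} asserts. The mechanism that actually achieves both is missing from your plan: (a) a downward induction on the level, in which the already-established conditions at levels $>k$ are fed into the sufficiency lemma to conclude that the perturbed integrals of $f^{\leq k}$ (not just of $f$) vanish, and (b) \emph{iterated} derivatives --- $k$ independent swaps, organized along a permutation so that the perturbed sets remain an $\alpha$-partition --- which annihilate all levels $<k$ and reduce the level-$k$ terms to pointwise evaluations (this is the content of \cref{Lemma:k-Operators}), followed by fixing $k$ carefully chosen entries of the perturbation points and integrating out the rest using the zero-marginal property to isolate exactly the terms appearing in (ii) and (iii). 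Without (a) and (b), your single-swap identity is genuinely weaker than what you need, and "invoking Lebesgue density" does not bridge the gap.

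A second concrete defect is your treatment of (ii). You propose to get alternation either by "iterating (iii)" or by an unspecified extra pair-swap. Iterating (iii) cannot work in general: condition (iii) only applies to $S$ with $|S|\leq m-1$, so for the top component $F_{[m]}$ (already the first nontrivial case $m=2$, $S=\{1,2\}$) it gives no information whatsoever, yet the theorem still asserts $F_{[m]}$ is alternating. In the paper, alternation at level $k$ is proved by its own perturbation argument (a $k$-cycle $\sigma=(1\,2\,\cdots\,k)$, $k$-fold differentiation, then fixing the entries $y_{12},y_{22},y_{33},\ldots,y_{kk}$ and integrating out the others), and it is needed as an input when (iii) at the same level is subsequently derived. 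So the "secondary concern" you flag is not a bookkeeping issue but a missing argument; as written, the necessity half of your proof does not go through.
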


We prove  \cref{thm:mainthm} in \cref{sec:proofThmMain}.
\begin{remark}
\label{rem:one}
Note that \cref{thm:mainthm}~(iii), applied to sets $S$ of size $1$, implies that there exists an integrable function $g:[0,1] \to \C$ with $\int_0^1 g(x)dx=0$ such that $F_{\{i\}}(x) =  \alpha_i g(x)$  for every $i\in [m]$.

\cref{thm:mainthm} provides a way to construct all functions $f$ that satisfy \cref{eq:mainConditionRepeat}.
Indeed one can take any collection of alternating  Walsh functions $F_S: [0,1]^m \to \C$ for $S \ni m$,  and then use \cref{thm:mainthm}~(iii) with $\ell=m$ to define $F_S$ for all other subsets $\emptyset \neq S \subseteq [m]$ accordingly (i.e. all $S \subseteq [m-1]$ of size at least one). Note that the resulting $F_S$ will automatically be Walsh functions and satisfy \cref{thm:mainthm}~(i-iii), and thus the function $f = \sum_{S \subseteq [m]} F_S$ will satisfy \cref{eq:mainConditionRepeat}.
\end{remark}

Next we show how the case $\sum \alpha_i <1$ follows from \cref{thm:mainthm}.

\begin{corollary}[{{\cite[Lemma 4.6]{JansonSos}}}]
\label{cor:symCase}
Let $\alpha_1,\ldots,\alpha_m \in (0,1)$, with $\sum_{i=1}^m \alpha_i <1$. An integrable function  $f:[0,1]^m \to \C$ satisfies
\[
\int_{A_1 \times \cdots \times A_m} f  = 0
\]
for all disjoint $A_1, \ldots, A_m \subseteq [0,1]$ of respective measures $\alpha_1,\ldots,\alpha_m$ if and only if $f=0$ almost everywhere.
The same assertion holds if $f$ is symmetric and $\sum_{i=1}^m \alpha_i =1$, but $(\alpha_1,\ldots,\alpha_m) \neq (1/m,\ldots,1/m)$.
\end{corollary}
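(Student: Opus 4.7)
The plan is to deduce both assertions of \cref{cor:symCase} from \cref{thm:mainthm}, using a padding trick for the first claim and a direct application for the second.

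For the first assertion, where $\sum_{i=1}^m \alpha_i < 1$, I would set $\alpha_{m+1} := 1 - \sum_{i=1}^m \alpha_i \in (0,1)$ and lift $f$ to $\tilde f : [0,1]^{m+1} \to \C$ by $\tilde f(x_1,\ldots,x_{m+1}) := f(x_1,\ldots,x_m)$. Every $(\alpha_1,\ldots,\alpha_{m+1})$-partition of $[0,1]$ is obtained from a disjoint family $A_1,\ldots,A_m$ of the prescribed measures by appending $A_{m+1} := [0,1] \setminus \bigcup_{i \le m} A_i$, and
$$\int_{A_1 \times \cdots \times A_{m+1}} \tilde f \;=\; \alpha_{m+1}\int_{A_1 \times \cdots \times A_m} f \;=\; 0,$$
so $\tilde f$ satisfies the hypothesis of \cref{thm:mainthm}. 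By uniqueness of the Walsh expansion, the decomposition of $\tilde f$ is given by $\tilde F_S = F_S$ for $S \subseteq [m]$ and $\tilde F_S = 0$ whenever $m+1 \in S$. For any non-empty $S \subseteq [m]$, I would apply \cref{thm:mainthm}~(iii) to $\tilde f$ with $\ell = m+1 \notin S$: each $S_i = S \cup \{m+1\} \setminus \{i\}$ contains $m+1$, so every summand on the right-hand side of \cref{eqn:F_S-formula} vanishes, whence $F_S = 0$. Together with $F_\emptyset = 0$ from condition (i), this forces $f = 0$ almost everywhere.

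For the second assertion, I would apply \cref{thm:mainthm} directly. Uniqueness of the Walsh expansion propagates the symmetry of $f$ to each component: for any permutation $\sigma$ of $S$, composing $f$ with the corresponding coordinate permutation leaves $f$ unchanged, and matching Walsh expansions shows that $F_S$ is symmetric in $(x_i)_{i\in S}$. Combined with condition (ii), which asserts that $F_S$ is alternating in the same coordinates, one obtains $F_S = 0$ for every $|S| \ge 2$. Condition (i) gives $F_\emptyset = 0$, and \cref{rem:one} yields $F_{\{i\}}(x) = \alpha_i g(x_i)$ for a single integrable $g:[0,1]\to\C$ with $\int_0^1 g = 0$. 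Hence $f(x) = \sum_{i=1}^m \alpha_i g(x_i)$, and demanding that $f$ be invariant under the swap of $x_i$ and $x_j$ gives $(\alpha_i - \alpha_j)(g(x_i) - g(x_j)) = 0$ almost everywhere. Since $(\alpha_1,\ldots,\alpha_m) \neq (1/m,\ldots,1/m)$ while $\sum \alpha_i = 1$, some pair satisfies $\alpha_i \neq \alpha_j$; for that pair $g$ must be almost everywhere constant, and the zero-mean condition then forces $g = 0$, so $f = 0$ almost everywhere.

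The only real subtlety I anticipate is the symmetry-propagation step in the second part: one must justify that the Walsh components inherit the symmetry of $f$, which is where the uniqueness clause of \cref{dfn:generalizedwalsh} is doing the heavy lifting. Everything else reduces to careful bookkeeping inside the framework already supplied by \cref{thm:mainthm} and \cref{rem:one}.
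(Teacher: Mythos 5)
Your proposal is correct and follows essentially the same route as the paper: padding with $\alpha_{m+1}=1-\sum_i\alpha_i$ and applying \cref{thm:mainthm} to $\tilde f$ for the first assertion, and combining \cref{thm:mainthm}~(ii) with the symmetry of $f$ plus \cref{rem:one} for the second. You merely spell out details the paper leaves implicit (e.g.\ invoking \cref{eqn:F_S-formula} with $\ell=m+1$ and the explicit $(\alpha_i-\alpha_j)(g(x_i)-g(x_j))=0$ step), which is fine.
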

\begin{proof}
First consider the  case $\sum_{i=1}^m \alpha_i <1$.
Define $\alpha_{m+1} = 1 - \sum_{i=1}^m \alpha_i$ and apply \cref{thm:mainthm} to the sequence $\alpha_1,\ldots,\alpha_{m+1}$ and the function $\tilde{f}:[0,1]^{m+1} \to \C$ defined as $\tilde{f}:(x_1,\ldots,x_{m+1}) \mapsto f(x_1,\ldots,x_m)$. The assertion now follows from  \cref{thm:mainthm}  as in the Walsh expansion $\tilde{f}=\sum_{S \subseteq [m+1]} \tilde{F}_S$, we have $\tilde{F}_S=0$ for every $S \subseteq [m+1]$ with $m+1 \in S$.

To prove the case where $f$ is symmetric but $(\alpha_1,\ldots,\alpha_m) \neq (1/m,\ldots,1/m)$ note that \cref{thm:mainthm}~(ii) and the symmetry of $f$ imply that $F_S = 0$ for every $S$ with $|S|>1$. Finally, \cref{rem:one} and the symmetry shows $F_S = 0$ for every $S$ of size $1$.
\end{proof}

Following the notation of~\cite{JansonSos}, we say that $\widetilde{\mathcal{P}}(F,\alpha_1,\ldots,\alpha_m)$ is a \emph{quasi-random property} if $W=p$ is the unique solution to
\begin{equation}
\label{eq:SymmCondition}
\frac{1}{m!} \sum_{\sigma \in S_m} \int_{A_{\sigma_1} \times \cdots \times A_{\sigma_m}} \prod_{(i,j) \in E(F)} W(x_i,x_j)  dx_1 \cdots dx_m = p^{|E(F)|} \prod_{i=1}^m  \alpha_i.
\end{equation}
As it is noticed in \cite{JansonSos}, \cref{cor:symCase} has the following consequence.
\begin{corollary}[{{\cite[Theorem 2.11]{JansonSos}}}]
\label{cor:quasiprop}
Let $F$ be a graph with vertex set $\{1,\ldots,m\}$ that contains at least one edge, and let $0<p \le 1$. Furthermore, let $(\alpha_1,\ldots,\alpha_m)$ be a vector of positive numbers with $\sum_{i=1}^m \alpha_i \le 1$.
\begin{itemize}
\item[(i)] If $(\alpha_1,\ldots,\alpha_m) \neq (1/m,\ldots,1/m)$, then $\widetilde{\mathcal{P}}(F,\alpha_1,\ldots,\alpha_m)$ is a quasi-random property.
\item[(ii)] If $\sum_{i=1}^m \alpha_i < 1$ then $\mathcal{P}(F,\alpha_1,\ldots,\alpha_m)$ is a quasi-random property.
\end{itemize}
\end{corollary}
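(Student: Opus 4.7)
The strategy is to reformulate each hypothesis as a vanishing-integral condition over products of disjoint sets and invoke Corollary~\ref{cor:symCase}. For a candidate graphon $W$ satisfying one of the assumed properties, set
\[
f(x_1,\ldots,x_m) \;:=\; \prod_{(i,j)\in E(F)} W(x_i,x_j) \;-\; p^{|E(F)|}.
\]

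For part (ii), the property $\mathcal{P}(F,\alpha_1,\ldots,\alpha_m)$ reads literally as $\int_{A_1\times\cdots\times A_m}f=0$ for every tuple of disjoint $A_i\subseteq[0,1]$ with $\lambda(A_i)=\alpha_i$. Since $\sum\alpha_i<1$, the first case of Corollary~\ref{cor:symCase} yields $f\equiv 0$ almost everywhere, that is, $\prod_{(i,j)\in E(F)}W(x_i,x_j)=p^{|E(F)|}$ a.e. The right-hand side being positive forces $W>0$ a.e., so taking logarithms converts the product identity into the linear one $\sum_{(i,j)\in E(F)}\log W(x_i,x_j)=|E(F)|\log p$. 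Fixing an edge $(u,v)\in E(F)$ and integrating over $x_k$ for $k\notin\{u,v\}$ yields a relation $\log W(x_u,x_v)+(d_u-1)\Phi(x_u)+(d_v-1)\Phi(x_v)+c_0=|E(F)|\log p$, where $\Phi(x)=\int \log W(x,y)\,dy$, $d_i$ is the degree of $i$ in $F$, and $c_0$ gathers contributions from edges disjoint from $\{u,v\}$. Exponentiating and using the symmetry of $W$ to merge the two univariate factors, this gives $W(x,y)=C\,\psi(x)\psi(y)$ a.e.; substituting back into the product identity and fixing all but one coordinate forces $\psi$ to be essentially constant on $[0,1]$, so $W$ is a.e.\ a constant which must equal $p$.

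For part (i), let $f_{\mathrm{sym}}(x):=\tfrac{1}{m!}\sum_{\sigma\in S_m}f(x_{\sigma(1)},\ldots,x_{\sigma(m)})$; a change of variables converts the defining condition~\cref{eq:SymmCondition} of $\widetilde{\mathcal{P}}$ into $\int_{A_1\times\cdots\times A_m}f_{\mathrm{sym}}=0$ for all disjoint $A_i$ of measures $\alpha_i$. Since $f_{\mathrm{sym}}$ is symmetric and $(\alpha_1,\ldots,\alpha_m)\neq(1/m,\ldots,1/m)$, either case of Corollary~\ref{cor:symCase} applies (the first when $\sum\alpha_i<1$, the second symmetric case when $\sum\alpha_i=1$), and yields $f_{\mathrm{sym}}\equiv 0$ a.e. The main obstacle is then to extract $W\equiv p$ from the vanishing of the symmetric polynomial $f_{\mathrm{sym}}$, which is strictly weaker than the pointwise product identity used in part~(ii); my plan is to write $W=p+w$ and expand $f_{\mathrm{sym}}$ as a sum of $S_m$-symmetrized homogeneous polynomials in $w$ of degrees $1,\ldots,|E(F)|$, whose degree-one part equals a nonzero scalar multiple of $\sum_{1\le i<j\le m}w(x_i,x_j)$. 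Integrating $f_{\mathrm{sym}}=0$ against test functions isolating the $S_m$-orbits should first force the marginals $x\mapsto\int w(x,y)\,dy$ to vanish, and iterating through the higher-degree symmetrized contributions---using the symmetry of $w$ and that $F$ has at least one edge---should pin down $w\equiv 0$ a.e.
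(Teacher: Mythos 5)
Your reduction of both parts to \cref{cor:symCase} is the intended one (the paper itself gives no further argument, attributing the remaining deductions to Janson--S\'os). Part (ii) of your write-up is essentially correct: from $\prod_{(i,j)\in E(F)}W(x_i,x_j)=p^{|E(F)|}$ a.e.\ one does get $W=p$ a.e. One point you should make explicit is why $\Phi(x)=\int\log W(x,y)\,dy$ is finite: since all factors are at most $1$, the identity forces each factor to be at least $p^{|E(F)|}$ a.e., so $\log W$ is bounded and your Fubini/integration steps are legitimate. (With that observation, integrating the linear identity for $u=\log W-\log p$ over all variables, then over all but one, then over all but the two endpoints of an edge gives $u=0$ directly, without the factorization $W=C\psi(x)\psi(y)$.)

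Part (i), however, has a genuine gap. After correctly deducing $f_{\mathrm{sym}}=0$ a.e.\ from \cref{cor:symCase}, the remaining step --- extracting $W=p$ from the pointwise identity $\tfrac{1}{m!}\sum_{\sigma}\prod_{(i,j)\in E(F)}W(x_{\sigma(i)},x_{\sigma(j)})=p^{|E(F)|}$ --- is only announced as a plan, and the plan as stated does not work: writing $W=p+w$, the symmetrized homogeneous pieces of different degrees in $w$ are all symmetric functions on $[0,1]^m$ and are in no way orthogonal, so ``integrating against test functions isolating the $S_m$-orbits'' cannot separate the degree-one part from the higher-degree ones; every test integration yields a single equation to which all degrees contribute, and you cannot conclude that the marginal $x\mapsto\int w(x,y)\,dy$ vanishes, let alone run an iteration. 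There is a much softer way to close this: since $A^m$ is invariant under permuting coordinates, $\int_{A^m}f=\int_{A^m}f_{\mathrm{sym}}=0$, i.e.\ $\int_{A^m}\prod_{(i,j)\in E(F)}W(x_i,x_j)\,dx=p^{|E(F)|}\lambda(A)^m$ for \emph{every} measurable $A\subseteq[0,1]$, which is exactly the hereditary Simonovits--S\'os condition recalled in the introduction and is known to force $W=p$ a.e.\ (Yuster's theorem even allows one fixed measure for $A$). Either invoke that known result or give an actual proof that the symmetrized pointwise identity forces $W=p$; the expansion-in-$w$ sketch as written is not one.
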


We call two vertices in a graph \emph{twins} if they share the same neighbors (and thus
there is no edge between them). Next we use \cref{thm:mainthm} to prove a theorem about graphs containing twin vertices. This in particular solves \cite[Problem 2.19]{JansonSos}
regarding quasi-random properties of stars by noting that stars with at least three
vertices always contain twins.

\begin{theorem} \label{thm:twins}
Let $F$ be a graph containing twins, and let $0< p \le 1$, then $\mathcal{P}(F, \alpha_1,\ldots,\alpha_m)$ is a quasi-random property for all $\alpha_1,\ldots,\alpha_m \in (0,1)$ with $\sum_{i=1}^m \alpha_i \le 1$.
\end{theorem}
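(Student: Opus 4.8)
\textbf{Proof proposal for \cref{thm:twins}.}
The plan is to reduce the statement about $\mathcal P(F,\alpha_1,\ldots,\alpha_m)$ to an application of \cref{thm:mainthm}, exactly as suggested by the introduction: setting
$$f = \left(\prod_{(i,j)\in E(F)} W(x_i,x_j)\right) - p^{|E(F)|},$$
the property $\mathcal P(F,\alpha_1,\ldots,\alpha_m)$ holds for $W$ if and only if $f$ satisfies \cref{eq:mainCondition} for all disjoint $A_1,\ldots,A_m$ of measures $\alpha_1,\ldots,\alpha_m$. When $\sum \alpha_i < 1$ this is already covered by \cref{cor:quasiprop}~(ii), so the only case requiring work is $\sum_{i=1}^m \alpha_i = 1$; and if in addition $(\alpha_1,\ldots,\alpha_m)\neq(1/m,\ldots,1/m)$ we may hope to invoke \cref{cor:symCase}. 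The remaining, genuinely new case is thus $\sum \alpha_i =1$ with all $\alpha_i = 1/m$, and here we must use the hypothesis that $F$ has twin vertices.

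So assume $\sum \alpha_i = 1$. Let $a,b$ be twins in $F$, say $a = m-1$ and $b = m$ after relabeling (note there is no edge $ab$, and $a,b$ have identical neighborhoods $N \subseteq [m]\setminus\{a,b\}$). Consider the Walsh expansion $f = \sum_{S\subseteq[m]} F_S$. First, $F_\emptyset = \int f = \prod \alpha_i\cdot(\text{stuff}) $ — more precisely $F_\emptyset = t(F,W) - p^{|E(F)|}$ is a constant, and \cref{thm:mainthm}~(i) forces it to be $0$; since $\mathcal P$ is what we are trying to characterize we instead argue the other direction, assuming \cref{eq:mainCondition} holds and deriving $W = p$. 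The key observation is that $f$ is \emph{symmetric in coordinates $a$ and $b$}: swapping $x_a$ and $x_b$ in $\prod_{(i,j)\in E(F)}W(x_i,x_j)$ permutes the edge set of $F$ (it swaps the two stars at $a$ and $b$), hence leaves the product unchanged. Consequently each Walsh component $F_S$ is also symmetric under the $a\leftrightarrow b$ swap (uniqueness of the Walsh expansion). Now apply \cref{thm:mainthm}~(ii): for any $S$ with $|S|\ge 2$ and $\{a,b\}\subseteq S$, $F_S$ is alternating with respect to the coordinates in $S$, so interchanging $x_a$ and $x_b$ negates $F_S$; combined with the symmetry just noted, $F_S = 0$. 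To kill the remaining components, I would run \cref{eqn:F_S-formula} with $\ell = a$ (or $\ell = b$) to propagate these vanishings: for $S\subseteq[m]$ with $a\notin S$, $b\in S$, say, the formula expresses $F_S$ as a combination of the $F_{S_i}$ with $S_i = S\cup\{a\}\setminus\{i\}$, one of which ($i=b$) is $S\setminus\{b\}\cup\{a\}$ and the others contain $\{a,b\}$; iterating carefully should force all $F_S$ with $|S|\ge 2$ to vanish, then the size-$1$ and size-$0$ parts via \cref{rem:one} and \cref{thm:mainthm}~(i). Having shown $F_S = 0$ for all $S$, we get $f = 0$, i.e. $\prod_{(i,j)\in E(F)} W(x_i,x_j) = p^{|E(F)|}$ a.e.; a now-standard argument (as in \cite{MR1645698,JansonSos}) — integrating out all but the coordinates of a single edge, using that $F$ has an edge and hence $t(F,W)$ controls $t(K_2,W)$ and higher moments of the degree function — yields $W = p$ a.e.

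I expect the main obstacle to be the combinatorial bookkeeping in the propagation step: showing that the single symmetry $F_S = 0$ for $\{a,b\}\subseteq S$, $|S|\ge 2$, together with \cref{eqn:F_S-formula}, actually forces \emph{every} $F_S$ with $|S|\ge 2$ to vanish, rather than just those containing both twins. The clean way is probably to choose $\ell\in\{a,b\}$ in \cref{eqn:F_S-formula} and induct on $|S\cap\{a,b\}|$ going downward, or alternatively to observe that $f$ restricted to the diagonal $x_a = x_b$ already determines $f$ (by symmetry) and is a function of $m-1$ variables with $\sum$ of the merged weights still summing to $1$ but now with a merged weight $\alpha_a+\alpha_b = 2/m \neq 1/(m-1)$ in general — wait, that is only unequal when $m\ge 3$, which is automatic since twins require $m \ge 2$ and the $m=2$ case ($F$ two isolated-from-each-other... actually $K_2$ has no twins) is vacuous — so one could try to reduce directly to \cref{cor:symCase} applied to a function on fewer coordinates. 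I would first attempt this reduction-to-fewer-variables route, as it sidesteps the index juggling; if the merged-weight-equals-$1/(m-1)$ exception bites (it cannot, as just argued, for $m\ge 3$), the direct propagation argument is the fallback. The secondary obstacle — deducing $W=p$ from $\prod_{E(F)}W = p^{|E(F)|}$ — is routine and already handled in the cited literature, so I would cite rather than reprove it.
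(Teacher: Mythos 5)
Your opening step — that the twin symmetry of $f$ makes each $F_S$ with $\{a,b\}\subseteq S$ both symmetric and (by \cref{thm:mainthm}~(ii)) alternating in the twin coordinates, hence zero — is exactly the first move of the paper's proof. But the rest of your plan has a genuine gap: the propagation step cannot work, because it is simply not true that conditions (i)--(iii) of \cref{thm:mainthm} together with symmetry in the two twin coordinates force all $F_S$ to vanish. You already noticed the danger yourself: for $S$ containing exactly one twin, running \cref{eqn:F_S-formula} with $\ell$ the other twin only yields $\bigl(\tfrac{1}{\alpha(S)}-\tfrac{1}{\alpha(S_b)}\bigr)F_S=0$, which is vacuous whenever $\alpha_a=\alpha_b$ — and the equal-weight case is precisely the one you cannot dispose of otherwise (note also that \cref{cor:symCase} and \cref{cor:quasiprop}~(i) require $f$ to be symmetric in \emph{all} coordinates, resp.\ concern $\widetilde{\mathcal P}$, so the case $\sum\alpha_i=1$ with general unequal $\alpha_i$ is not "already covered" either). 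Here is a concrete obstruction: take $m=3$, twins $\{2,3\}$, $\alpha_1=\alpha_2=\alpha_3=\tfrac13$, and let $H:[0,1]^2\to\C$ be nonzero with $H(x,y)=-H(y,x)$ and $\int_0^1 H(x,y)\,dy=0$. Then
\begin{equation*}
f(x_1,x_2,x_3)=H(x_1,x_2)+H(x_1,x_3)
\end{equation*}
is symmetric in the twin coordinates, satisfies (i)--(iii) (it is generated as in \cref{rem:one} from the alternating data $F_{\{1,3\}}=H$, $F_{\{3\}}=F_{\{2,3\}}=F_{\{1,2,3\}}=0$), and indeed $\int_{A_1\times A_2\times A_3}f=\tfrac13\int_{A_1\times\overline{A_1}}H=-\tfrac13\int_{A_1\times A_1}H=0$ for every $\alpha$-partition, yet $f\not\equiv 0$ and its level-two components are nonzero. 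So no amount of bookkeeping with \cref{eqn:F_S-formula} alone can conclude $f=0$; you must use the specific multiplicative form $f=\prod_{(i,j)\in E(F)}W(x_i,x_j)-p^{|E(F)|}$, not just its Walsh-theoretic consequences. (Your fallback "restrict to the diagonal $x_a=x_b$ and reapply \cref{cor:symCase}" also does not work as stated: the hypothesis gives integrals over products of \emph{disjoint} sets, and the restricted function is in the wrong framework for that corollary.)

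The paper's proof uses the product structure exactly where your argument stalls. After the step you got right ($F_{S\cup\{m-1,m\}}=0$ for all $S\subseteq[m-2]$), it compares two evaluations of \cref{eqn:twins}: integrating out $x_{m-1}$ and $x_m$ independently produces $\bigl(\int_{[0,1]}\prod_{i=1}^r W(x_i,a)\,da\bigr)^2$, while setting $x_{m-1}=x_m=a$ (via Lebesgue differentiation, to cope with the components vanishing only a.e.) and integrating in $a$ produces $\int_{[0,1]}\prod_{i=1}^r W(x_i,a)^2\,da$. Equality of these two expressions is the equality case of Cauchy--Schwarz, forcing $\prod W(x_i,x_j)\prod_i W(x_i,a)$ to be independent of $a$; one can then shrink $A_m$ to a subset $B$ of smaller measure and invoke \cref{cor:quasiprop}~(ii) (the $\sum\alpha_i<1$ case) to conclude $W=p$. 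If you want to salvage your write-up, keep your first step and replace the propagation and the "reduce to all-$1/m$" reduction by an argument of this Cauchy--Schwarz type that genuinely exploits the twin product $\prod_i W(x_i,x_{m-1})W(x_i,x_m)$.
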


\begin{proof}
The case $\sum \alpha_i <1$ follows from \cref{cor:quasiprop}. It remains to establish the case $\sum \alpha_i = 1$. Let $f := \prod_{(i,j)\in E(F)} W(x_i, x_j) - p^{|E(F)|}$. We will show that if $\int_{A_{[m]}} f = 0$ for all $\alpha$-partitions, then  $W = p$ almost everywhere.

Without loss of generality, assume $v_{m-1}, v_m$ are twins in $F$, and $v_1, \ldots, v_r$, $r \le m-2$, are their common neighbors. Let  $f= \sum_{S \subseteq [m]} F_S$ be the Walsh expansion of $f$. Therefore, $f$ can be written in the following form
\begin{align} \label{eqn:twins}
f
&= \left(\prod_{\substack{(i,j)\in E(F)\\ i,j\in [m-2]}} W(x_i,x_j) \right)\Big( \prod_{i=1}^r W(x_i, x_{m-1}) W(x_i, x_m) \Big) - p^{|E(F)|} \nonumber \\
&= \sum_{S \subseteq [m-2]} (F_S + F_{S\cup \{m-1\}} + F_{S\cup \{m\}}+F_{S\cup \{m-1,m\}}).
\end{align}
We claim that for every $S\subseteq [m-2]$, $F_{S\cup \{m-1,m\}} =0$ almost everywhere. Indeed since $v_{m-1}, v_m$ are twins,  $F_{S\cup \{m-1,m\}}$ is symmetric with respect to the two coordinates $x_{m-1}$ and $x_m$, and on the other hand by \cref{thm:mainthm}~(ii), $F_{S\cup \{m-1,m\}}$ is also an  alternating function with respect to those coordinates. Hence, $F_{S\cup \{m-1,m\}} = 0$ almost everywhere.

Fixing $x_1,\ldots,x_{m-2}$ and integrating \cref{eqn:twins} with respect to $x_{m-1},x_m \in [0,1]$, we obtain that for almost every $x_1,\ldots,x_{m-2}$,
\begin{equation}\label{eq:LHS}
p^{|E(F)|} + \sum_{S\subseteq [m-2]} F_S = \prod_{\substack{(i,j)\in E(F)\\ i,j\in [m-2]}} W(x_i,x_j) \left( \int_{[0,1]}  \prod_{i=1}^r W(x_i,a) da \right)^2.
\end{equation}
Next we would like to replace the same value $a$ for both $x_{m-1}$ and $x_m$ in \cref{eqn:twins}, and then integrate with respect to $a$. However, since $F_{S\cup \{m-1,m\}}=0$ only almost everywhere, we need to consider the limit instead. More precisely we deduce the following from \cref{eqn:twins} and the fact that $F_{S\cup \{m-1,m\}}=0$ almost everywhere:  For almost all $x_1,\ldots,x_{m-2},a \in [0,1]$, denoting $y:=(x_{[m-2]},a,a) \in [0,1]^m$, we have
\begin{align*}
\lim_{\epsilon_1, \epsilon_2 \rightarrow 0^+}\frac{1}{\epsilon_1 \epsilon_2} \int_{B_{\epsilon_1/2}(a)}\int_{B_{\epsilon_2/2}(a)} f dx_{m-1} dx_m &= \prod_{\substack{(i,j)\in E(F)\\ i,j\in [m-2]}} W(x_i,x_j)  \prod_{i=1}^r W(x_i,a)^2 - p^{|E(F)|}
\\ &= \sum_{S \subseteq [m-2]} F_S(y) + F_{S\cup \{m-1\}}(y)+ F_{S\cup \{m\}}(y).
\end{align*}
Integrating this with respect to $a$, we obtain that for almost all $x_{[m-2]}$,
\begin{equation}\label{eq:RHS}
p^{|E(F)|} + \sum_{S\subseteq [m-2]} F_S = \prod_{\substack{(i,j)\in E(F)\\ i,j\in [m-2]}} W(x_i,x_j) \int_{[0,1]}  \prod_{i=1}^r W(x_i,a)^2 da.
\end{equation}
Hence $(\ref{eq:LHS}) = (\ref{eq:RHS})$ for almost all $x_{[m-2]}$, and then the equality condition of Cauchy-Schwarz implies that for almost all $x_{[m-2]}$,  $\prod_{\substack{(i,j)\in E(F)\\ i,j\in [m-2]}} W(x_i,x_j)   \prod_{i=1}^r W(x_i,a)$ does not depend on $a$.  It follows that $$\prod_{\substack{(i,j)\in E(F)\\ i,j\in [m-2]}} W(x_i,x_j)  \prod_{i=1}^r W(x_i,x_{m-1}) W(x_i,a)$$ does not depend on $a$ for almost all
$x_{[m-1]}$. Hence for every $\alpha$-partition $A_{[m]}$ and every $B \subseteq A_m$ with $\lambda(B)>0$, we have
\[
\int_{A_{[m-1] \times B}} \prod_{(i,j) \in E(F)}  W(x_i,x_j) dx_{[m]}
= \frac{\lambda(B)}{\alpha_m} \int_{A_{[m]}} \prod_{(i,j) \in E(F)}  W(x_i,x_j) dx_{[m]}
= p^{|E(F)|} \lambda(B) \prod_{i=1}^{m-1} \alpha_i. \nonumber
\]
Now \cref{cor:quasiprop}~(ii) implies that $W=p$ almost everywhere.
\end{proof}

Finally, we state our theorem about symmetric functions which in particular solves~\cite[Conjecture 9.4]{JansonSos}.

\begin{theorem}\label{thm:symmetric}
Let $\alpha\in (0,1)$, and $0 \le r \le m$ be an integer. A symmetric integrable function $f:[0,1]^m \rightarrow \C$ satisfies $\int_{A^{m-r} \times (\overline{A})^{r}} f =0$ for every $A\subset [0,1]$ with $\lambda(A)=\alpha$ if and only if at least one of the following two cases holds.
\begin{itemize}
\item[(i)] $f=0$ almost everywhere.
\item[(ii)] For $K := K(m,r,\alpha) = \{k \in [m]: \sum_{i=0}^k {m-r \choose k-i}
{r \choose i}   \left(\frac{-\alpha}{1-\alpha}\right)^{i}= 0\}$, we have
$$
f(x_1,\ldots,x_m)= \sum_{k \in K} \sum_{S\subseteq [m], |S|=k} g_k(x_S)
$$
where $g_k:[0,1]^k \to \C$ are symmetric functions satisfying $\int g_k(x_1,\ldots,x_k) dx_i = 0$ for every $i \in [k]$.
\end{itemize}
\end{theorem}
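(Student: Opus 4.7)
The plan is to mirror the strategy of \cref{thm:mainthm}: expand $f$ into its generalized Walsh decomposition, compute the integral $\int_{A^{m-r}\times\overline A^r}f$ in closed form, and then peel off integration variables one at a time via perturbations of $A$. First, by the symmetry of $f$ and uniqueness of the Walsh expansion, one writes $f=\sum_{k=0}^{m}\sum_{|S|=k}g_k(x_S)$, where each $g_k\colon[0,1]^k\to\C$ is symmetric and satisfies $\int_0^1 g_k\,dx_i=0$ for every $i\in[k]$. Second, for each $S$ of size $k$ with $a:=|S\cap[m-r]|$ and $b:=k-a$, integrating out the coordinates in $\overline S$ contributes $\alpha^{m-r-a}(1-\alpha)^{r-b}$, while the mean-zero identity $\int_{\overline A}g_k\,dx_j=-\int_A g_k\,dx_j$ applied $b$ times gives $\int_{A^a\times\overline A^b}g_k=(-1)^b G_k(A)$, where $G_k(A):=\int_{A^k}g_k$. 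Summing over $|S|=k$,
\[
\int_{A^{m-r}\times\overline A^r}f\;=\;\sum_{k=0}^{m}C_k\,G_k(A),\qquad C_k=\sum_{i}\binom{m-r}{k-i}\binom{r}{i}\alpha^{m-r-k+i}(1-\alpha)^{r-i}(-1)^i,
\]
and factoring this against the definition of $K$ shows that $\{k:C_k=0\}=K(m,r,\alpha)$. The ``if'' direction is then immediate: if $g_k=0$ for every $k\notin K$, then every nonzero term in $\sum_k C_kG_k(A)$ has $C_k=0$.

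For the ``only if'' direction, I would prove by induction on $p\in\{0,1,\dots,m\}$ the following refinement of the hypothesis:
\[
\text{(P}_p\text{):}\qquad \sum_{\ell\ge p}(\ell)_p\,C_\ell\,\Psi_\ell^{(p)}(x_{[p]};A_0)\;=\;0\quad\text{for a.e.\ }x_{[p]}\in[0,1]^p,
\]
valid for every measure-$\alpha$ set $A_0$, where $(\ell)_p:=\ell(\ell-1)\cdots(\ell-p+1)$ and $\Psi_\ell^{(p)}(x_{[p]};A_0):=\int g_\ell(x_{[p]},y_1,\dots,y_{\ell-p})\prod_{i=1}^{\ell-p}(\mathbf 1_{A_0}(y_i)-\alpha)\,dy$. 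The base case $p=0$ coincides with the hypothesis once one observes that $G_\ell(A_0)=\int g_\ell\prod_i(\mathbf 1_{A_0}-\alpha)(x_i)\,dx$, which follows from the mean-zero property after expanding $\mathbf 1_{A_0}=(\mathbf 1_{A_0}-\alpha)+\alpha$. For the step $p\to p+1$, fix $x_{[p]}$ and perturb $A_0$ to $A_t=(A_0\setminus C_t)\cup B_t$ where $B_t\subset\overline{A_0}$ and $C_t\subset A_0$ are measurable sets of measure $t$ concentrating as $t\to 0^+$ near Lebesgue density points $b\in\overline{A_0}$ and $c\in A_0$. A direct calculation gives
\[
\tfrac{d}{dt}\Psi_\ell^{(p)}(x_{[p]};A_t)\big|_{t=0}=(\ell-p)\bigl[\Psi_\ell^{(p+1)}(x_{[p]},b;A_0)-\Psi_\ell^{(p+1)}(x_{[p]},c;A_0)\bigr],
\]
so differentiating (P$_p$) at $t=0$ shows that the function $x_{p+1}\mapsto\sum_{\ell\ge p+1}(\ell)_{p+1}C_\ell\Psi_\ell^{(p+1)}(x_{[p]},x_{p+1};A_0)$ takes the same value at a.e.\ $b\in\overline{A_0}$ and a.e.\ $c\in A_0$, hence is constant a.e.\ on $[0,1]$. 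Its integral over $x_{p+1}$ vanishes by the mean-zero property of each $g_\ell$, so the function is zero a.e., which is exactly (P$_{p+1}$).

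The proof then concludes by a downward induction on $\ell$ from $m$ to $0$: (P$_\ell$) reads $\ell!\,C_\ell g_\ell(x_{[\ell]})+\sum_{q>\ell}(q)_\ell C_q\Psi_q^{(\ell)}(x_{[\ell]};A_0)=0$, and in the sum each term with $q>\ell$ vanishes (either $C_q=0$ trivially, or the inductive hypothesis $g_q=0$ forces $\Psi_q^{(\ell)}=0$), so $g_\ell=0$ whenever $C_\ell\ne 0$, i.e.\ whenever $\ell\notin K$. The main technical obstacle is the justification of the differentiation used in the inductive step of (P$_p$) at almost every pair $(b,c)$; I expect this to reduce to standard Lebesgue differentiation applied to the bounded measurable functions $\Psi_\ell^{(p+1)}(x_{[p]},\cdot\,;A_0)$, combined with a careful choice of approximating sets $B_t,C_t$. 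Once this is handled, the two-tier induction (upward in $p$, then downward in $\ell$) delivers the characterization.
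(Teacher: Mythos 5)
Your proposal is correct and takes essentially the same route as the paper: the symmetric Walsh/Hoeffding decomposition, the identical computation of the coefficients $\sum_i\binom{m-r}{k-i}\binom{r}{i}\left(\frac{-\alpha}{1-\alpha}\right)^i$ that defines $K(m,r,\alpha)$, and a measure-preserving first variation of $A$ analyzed via Lebesgue differentiation, with the mean-zero property used to isolate Walsh levels. Your two-tier induction with the functions $\Psi^{(p)}_\ell$ is a repackaging of the paper's iterated operators $\partial_{\y,\z}$ (\cref{Lemma:k-Operators}) followed by its selection of a largest level outside $K$, and the technical points you flag (Lebesgue points, and the fact that (P$_p$) must be available at the fixed $x_{[p]}$ for the whole family $A_t$) are of the same kind the paper treats informally with its almost-everywhere quantifiers.
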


Note that \cref{thm:symmetric}~(ii) means that in the  Walsh expansion $f= \sum_{S \subseteq [m]} F_S$, we have $F_S=0$ if $|S| \not\in K(m,r,\alpha)$ and $F_S(x)=g_k(x_S)$ if $|S|=k \in  K(m,r,\alpha)$. We shall not  venture to characterize the sets $K(m,r,\alpha)$. However we remark that these sets can contain more than one element, as for example, it is not difficult to see that  $K(6,3,\frac{1}{2})=\{1,3,5\}$.   Thus, in general, the  Walsh expansion of $f$ can be supported on more than one ``level''.

The case $m=3$ and $r=1$ of \cref{thm:symmetric} was conjectured in~\cite[Conjecture 9.4]{JansonSos}. Note that if  $r=1$ in \cref{thm:symmetric}, then  we have $K=\{k\}$ if  $\alpha=\frac{m-k}{m}$  and $K(m,r,\alpha) = \emptyset$ if $\alpha$ is not of the form $\frac{m-k}{m}$. We state this case separately as \cref{cor:symmetric_r_one}

\begin{corollary}[{{\cite[Conjecture 9.4]{JansonSos}}}]\label{cor:symmetric_r_one}
Let $\alpha\in (0,1)$. A   symmetric integrable function $f:[0,1]^m \rightarrow \C$ satisfies $\int_{A^{m-1} \times \overline{A}} f =0$ for every $A\subset [0,1]$ with $\lambda(A)=\alpha$ if and only if at least one of the following two cases holds.
\begin{itemize}
\item[(i)] $f=0$ almost everywhere.
\item[(ii)] $\alpha= \frac{m-k}{m}$ for some $k\in [m-1]$ and
$$
f(x_1,\ldots,x_m)= \sum_{S\subseteq [m], |S|=k} g(x_S)
$$
where $g:[0,1]^k \to \C$ is a symmetric function satisfying $\int g(x_1,\ldots,x_k) dx_i = 0$ for every $i \in [k]$.
\end{itemize}
\end{corollary}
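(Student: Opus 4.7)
The plan is to deduce this corollary directly from \cref{thm:symmetric} specialized to $r=1$. All that is really needed is to determine the set $K(m,1,\alpha)$ explicitly, and this reduces to a short binomial computation.

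First I would observe that with $r=1$ the factor $\binom{r}{i}=\binom{1}{i}$ vanishes unless $i\in\{0,1\}$, so the defining sum for $K(m,1,\alpha)$ collapses to the two-term expression
\[
 \binom{m-1}{k} - \binom{m-1}{k-1}\cdot \frac{\alpha}{1-\alpha},
\]
with the usual convention that $\binom{m-1}{-1}=\binom{m-1}{m}=0$. I would then rule out the two boundary values: for $k=0$ the expression equals $1$, and for $k=m$ it equals $-\alpha/(1-\alpha)$, both nonzero since $\alpha\in(0,1)$; so $K(m,1,\alpha)\subseteq\{1,\ldots,m-1\}$.

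Next, for $k\in\{1,\ldots,m-1\}$, I would use the identity $\binom{m-1}{k}=\frac{m-k}{k}\binom{m-1}{k-1}$ to see that the expression vanishes exactly when $(m-k)(1-\alpha)=k\alpha$, i.e.\ when $\alpha=(m-k)/m$. Thus for each $\alpha\in(0,1)$ there is at most one $k\in\{1,\ldots,m-1\}$ with $k\in K(m,1,\alpha)$, namely $k=m(1-\alpha)$, and such a $k$ exists if and only if $\alpha$ is of the form $(m-k)/m$ for some $k\in[m-1]$.

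Finally, plugging this description of $K(m,1,\alpha)$ into \cref{thm:symmetric} yields the two cases of the corollary: when $\alpha$ is not of the form $(m-k)/m$ the set $K$ is empty, case (ii) of \cref{thm:symmetric} forces $f=0$ almost everywhere; when $\alpha=(m-k)/m$ the set $K=\{k\}$ is a singleton and case (ii) of \cref{thm:symmetric} gives precisely the desired form $f=\sum_{|S|=k} g(x_S)$ with $g$ a symmetric function whose marginals all vanish. There is no genuine obstacle here — the only point that requires any care is handling the boundary indices $k=0$ and $k=m$ correctly so as not to spuriously include them in $K$.
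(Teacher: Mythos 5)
Your proposal is correct and matches the paper's own treatment: the paper likewise obtains \cref{cor:symmetric_r_one} by specializing \cref{thm:symmetric} to $r=1$ and observing that $K(m,1,\alpha)=\{k\}$ when $\alpha=\frac{m-k}{m}$ and $K(m,1,\alpha)=\emptyset$ otherwise, which is exactly the binomial computation you carry out (the paper merely asserts it without the details). Your handling of the boundary index $k=m$ (and the superfluous $k=0$, which is not in $[m]$ anyway) is fine, so there is nothing to add.
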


\subsection{Proof Technique: A first variation argument} \label{section:derivative}

In this short section we prove the main step  used in the proofs of \cref{thm:mainthm} and \cref{thm:symmetric}.
Let us recall the following form of the Lebesgue differentiation theorem.
\begin{lemma}\label{lebesgue-differentiation-thm}
Let $g:[0,1]\rightarrow \C$ be Lebesgue integrable and let $x\in [0,1]$ be a Lebesgue point of $g$. Then
$$
g(x)= \lim_{\epsilon\rightarrow 0^+} \frac{1}{\epsilon}\int_{B_{\epsilon/2}(x)} g(y)dy,
$$
where $B_{\epsilon/2}(x)$ is the ball of radius $\epsilon/2$ around $x$.
\end{lemma}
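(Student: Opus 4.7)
The plan is to observe that this lemma is, up to notation, essentially the definition of a Lebesgue point, so the proof will reduce to a one-line application of the triangle inequality. Recall that $x \in [0,1]$ is by definition a Lebesgue point of an integrable $g$ when
$$\lim_{\epsilon \to 0^+} \frac{1}{\epsilon} \int_{B_{\epsilon/2}(x)} |g(y) - g(x)|\, dy = 0,$$
so the hypothesis of the lemma carries essentially all of the content, and my job is only to turn ``the local average of $|g - g(x)|$ vanishes'' into ``the local average of $g$ tends to $g(x)$.''

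The main step is to rewrite the difference between the averaged integral and $g(x)$ so that the Lebesgue-point definition applies directly. Since $g(x)$ is a constant in $y$ and $\lambda(B_{\epsilon/2}(x)) = \epsilon$ for $\epsilon$ small enough that $B_{\epsilon/2}(x) \subseteq [0,1]$, I can write $g(x) = \frac{1}{\epsilon} \int_{B_{\epsilon/2}(x)} g(x)\, dy$, and therefore
$$\left| \frac{1}{\epsilon} \int_{B_{\epsilon/2}(x)} g(y)\, dy - g(x) \right| = \left| \frac{1}{\epsilon} \int_{B_{\epsilon/2}(x)} (g(y) - g(x))\, dy \right| \leq \frac{1}{\epsilon} \int_{B_{\epsilon/2}(x)} |g(y) - g(x)|\, dy.$$
The right-hand side tends to $0$ by the defining property of a Lebesgue point, yielding the claim after taking $\epsilon \to 0^+$.

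The only technical wrinkle I anticipate is the boundary behavior: if $x$ lies within $\epsilon/2$ of $0$ or $1$, then $B_{\epsilon/2}(x)$ is not entirely contained in $[0,1]$, and the normalization $\frac{1}{\epsilon}$ should strictly be replaced by $\frac{1}{\lambda(B_{\epsilon/2}(x) \cap [0,1])}$ (or else one extends $g$ by zero). Since that ratio lies in $[1/2,1]$ and tends to $1$ as $\epsilon \to 0$ whenever $x$ is an interior point, this does not affect the limit. Consequently I do not foresee any substantive obstacle; the lemma is a routine restatement of the Lebesgue-point definition and needs no deeper machinery (such as the Hardy--Littlewood maximal inequality, which would only come in if one also wanted to prove that almost every point is a Lebesgue point).
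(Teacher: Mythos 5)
Your argument is correct: since the lemma assumes $x$ is a Lebesgue point, the claim follows at once from the triangle inequality applied to $\frac{1}{\epsilon}\int_{B_{\epsilon/2}(x)}(g(y)-g(x))\,dy$, and your remark about the normalization near the endpoints of $[0,1]$ is the right (and only) technical caveat. The paper itself gives no proof --- it simply recalls this statement as a standard form of the Lebesgue differentiation theorem --- so your write-up supplies exactly the routine verification the authors took for granted.
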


Let $\alpha_1, \ldots ,\alpha_m\in (0,1)$  and suppose that $A_1, \ldots, A_m \subseteq [0,1]$ are of measures $\alpha_1,\ldots,\alpha_m$, respectively. Consider $K \subseteq [m]$, and given $\y, \z \in [0,1]^K$ and $t \ge 0$, set
\begin{equation}
\label{eq:At}
A_i(t) := A_i\cup B_{t/2}(\z(i)) \setminus B_{t/2}(\y(i))
\end{equation}
for every $i \in K$, and $A_i(t):=A_i$ for $i \in \overline{K}$. Now consider an integrable function $f:[0,1]^m \to \C$, and define
$$F(t) = \int_{A_1(t) \times \cdots \times A_m(t) } f.$$
It follows from \cref{lebesgue-differentiation-thm} that for almost every $\y \in \Int(A_{K})$ and $\z \in \Int(\prod_{i \in K} \overline{A_i})$, we have
\begin{equation}
\label{eq:differentation}
\left. \frac{dF(t)}{dt} \right|_{0^+} = \sum_{i \in K} \int_{A_{[m] \setminus \{i\}}} \left(f_{\z(i)} - f_{\y(i)}  \right)  dx_{[m] \setminus \{i\}} =\sum_{i \in K} \frac{1}{\alpha_i}\int_{A_{[m]}} \left(f_{\z(i)} - f_{\y(i)} \right)  dx_{[m]}.
\end{equation}
Let us introduce the notation
$$\partial^i_{\y,\z} f := \frac{f_{\z(i)} - f_{\y(i)}}{\alpha_i},$$
and
$$\partial^K_{\y,\z}f :=\sum_{i \in K} \partial^i_{\y,\z} f = \sum_{i  \in K} \frac{f_{ \z(i)} - f_{\y(i)}}{\alpha_i},$$
so that
$$\left. \frac{dF(t)}{dt} \right|_{0^+} =  \int_{A_{[m]}} \partial^K_{\y,\z} f.$$
Further, suppose $\Y = (\y_1, \ldots, \y_k)$ and $\Z = (\z_1, \ldots, \z_k)$  where $\y_i,\z_i \in [0,1]^K$ for $i=1,\ldots,k$. Define
$$\partial^K_{\Y,\Z} f := \partial^K_{\y_k,\z_k}  \cdots  \partial^K_{\y_1,\z_1} f.$$
Note that when $g:[0,1]^m \to \C$ does not depend on the $i$-th coordinate, then $\partial^{i}_{\y,\z} g = 0$.  Combining this and the fact that $\partial_{\y,\z}^i f$ does not depend on the $i$-th coordinate, we conclude that for any  Walsh function $F_S$, and  any $\Y,\Z \in ([0,1]^K)^k$, we have
$$\partial_{\vect{Y},\Z}^K F_S  =   \sum_{j_1,\ldots,j_k \in  S \cap K \atop  |\{j_1,\ldots,j_k\}|=k} \partial_{\y_k,\z_k}^{j_k}  \cdots  \partial_{\y_1,\z_1}^{j_1} F_S.$$
Expanding this formula leads to the following lemma which is central to the proofs of both \cref{thm:mainthm} and \cref{thm:symmetric}.
\begin{lemma} \label{Lemma:k-Operators}
Consider $S,K \subseteq [m]$, and let $F_S:[0,1]^m \to \C$ depend only on the coordinates in $S$. Given any $\Y,\Z \in ([0,1]^K)^k$, we have
$$\partial_{\Y,\Z}^K F_S =
\sum_{D \subseteq S \cap K \atop |D|=k} \frac{1}{\prod_{i \in D} \alpha_i} \sum_{\pi:D \xrightarrow{1:1} [k]}\sum_{B \subseteq  [k]} (-1)^{|B|} F_S(\w,\cdot)
$$
where $\w =\w_{B,\pi} \in [0,1]^{D}$ defined as
$$\w(t) = \left\{
\begin{array}{lcl}
\y_{\pi(t)}(t) & \qquad &  \pi(t) \in B, \\
\z_{\pi(t)}(t) & & \pi(t) \in [k] \setminus B.
\end{array} \right.
$$
In particular, $\partial_{\Y,\Z}^K F_S$ is equal to zero if $|S \cap K| < k$, and is a constant if $|S|=|S\cap K|=k$.
\end{lemma}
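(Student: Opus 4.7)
The plan is to expand the iterated operator $\partial_{\Y,\Z}^K = \partial_{\y_k,\z_k}^K \circ \cdots \circ \partial_{\y_1,\z_1}^K$ by linearity and then identify which terms survive when applied to $F_S$. First, since $F_S$ depends only on the coordinates in $S$, we have $\partial_{\y,\z}^i F_S = 0$ whenever $i \notin S$, because both $(F_S)_{\y(i)}$ and $(F_S)_{\z(i)}$ agree with $F_S$. Moreover, once $\partial_{\y,\z}^i$ has acted on any function, the result no longer depends on the $i$-th coordinate, so a further application of $\partial_{\y',\z'}^i$ with the same index $i$ yields zero. Distributing each $\partial_{\y_t,\z_t}^K = \sum_{i_t \in K} \partial_{\y_t,\z_t}^{i_t}$ in the composition and discarding terms killed by these two principles isolates the sum over ordered $k$-tuples $(j_1,\ldots,j_k)$ of pairwise distinct elements of $S \cap K$, which is precisely the identity displayed just before the lemma.

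For a fixed such tuple, the operators $\partial_{\y_1,\z_1}^{j_1},\ldots,\partial_{\y_k,\z_k}^{j_k}$ act on pairwise distinct coordinates and therefore commute, and their composition unfolds as a tensor product of first-difference operators:
\begin{equation*}
\partial_{\y_k,\z_k}^{j_k}\cdots\partial_{\y_1,\z_1}^{j_1} F_S
= \frac{1}{\prod_{t=1}^k \alpha_{j_t}} \sum_{B \subseteq [k]} (-1)^{|B|}\, F_S(\w_B,\cdot),
\end{equation*}
where $\w_B \in [0,1]^{\{j_1,\ldots,j_k\}}$ assigns to $j_t$ the value $\y_t(j_t)$ if $t \in B$ and $\z_t(j_t)$ otherwise; the sign $(-1)^{|B|}$ simply records the $|B|$ choices of the negative term $-f_{\y_t(j_t)}$ over the positive term $f_{\z_t(j_t)}$ in each difference factor.

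The final step is to reindex the outer sum over ordered $k$-tuples $(j_1,\ldots,j_k)$ of distinct elements of $S \cap K$ as a sum over pairs $(D,\pi)$, where $D = \{j_1,\ldots,j_k\} \subseteq S \cap K$ has size $k$ and $\pi \colon D \to [k]$ is the bijection $\pi(j_t) = t$. Under this dictionary, $\w_B$ becomes exactly the element $\w_{B,\pi} \in [0,1]^D$ appearing in the statement, so the expression collapses to the claimed formula. The two particular cases drop out immediately: if $|S \cap K| < k$ there is no ordered $k$-tuple of distinct elements in $S \cap K$ and the sum is empty; if $|S| = |S \cap K| = k$ then $D = S$ is forced, every coordinate on which $F_S$ depends is substituted by a fixed value, and the output is independent of the remaining coordinates, i.e.\ a constant.

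I expect the main difficulty to be bookkeeping rather than conceptual: one must verify carefully that the sign convention and the substitution rule produced by the tensor-product expansion of the commuting first differences agree with the $(D,\pi,B)$ parameterization in the lemma, so that no factor of $k!$ or stray sign slips in. Once this translation is set up, no further ideas are required and the lemma follows by direct expansion.
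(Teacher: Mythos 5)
Your proposal is correct and follows essentially the same route as the paper: the paper derives the identity $\partial_{\Y,\Z}^K F_S = \sum_{j_1,\ldots,j_k} \partial_{\y_k,\z_k}^{j_k}\cdots\partial_{\y_1,\z_1}^{j_1}F_S$ over distinct indices in $S\cap K$ from exactly the two vanishing principles you state, and then obtains the lemma by expanding each such composition and reindexing by $(D,\pi,B)$, which is precisely your bookkeeping step. Your sign, coefficient, and substitution conventions match the statement, so nothing is missing.
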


In the sequel,  $\partial_{\Y,\Z}$ and $\partial_{\y,\z}$ are respectively short forms for $\partial_{\Y,\Z}^{[m]}$ and $\partial_{\y,\z}^{[m]}$.

\section{Proof of \cref{thm:mainthm}}
\label{sec:proofThmMain}
Throughout this section fix $\alpha=(\alpha_1,\ldots,\alpha_m) \in (0,1)^m$. Given a subset $S \subseteq [m]$, denote $\alpha(S):=\prod_{i \in S} \alpha_i$. Let us recall the statement of \cref{thm:mainthm}.

\restate{\Cref{thm:mainthm}}{
Let $\alpha_1,\ldots,\alpha_m \in (0,1)$ satisfy $\sum_{i=1}^m \alpha_i =1$. An integrable function  $f:[0,1]^m \to \C$ with the  Walsh expansion $f = \sum_{S \subseteq [m]} F_S$ satisfies
\begin{equation}
\label{eq:mainConditionRepeat2}
\int_{A_1 \times \cdots \times A_m} f =0
\end{equation}
for all partitions of $[0,1]$ into disjoint sets $A_1,\ldots,A_m$ of respective measures $\alpha_1,\ldots,\alpha_m$ if and only if
\begin{itemize}
\item[(i)] $F_{\emptyset} = 0$;
\item[(ii)] $F_S$ is an alternating function (with respect to the coordinates in $S$) for all $S \subseteq [m]$ with $|S| \ge 2$;
\item[(iii)]  For $S\subseteq [m]$, with $1\leq |S|\leq m-1$, and $\ell\in [m]\backslash S$, we have
\begin{equation}
\label{eqn:F_S-formula}
 \frac{1}{\prod_{i \in S} \alpha_i} F_S(x) = \sum_{i \in S} \frac{1}{\prod_{j \in S_i} \alpha_j} F_{S_i} (x^{(i)}),
\end{equation}
where $x^{(i)}$ is obtained from $x=(x_1,\ldots,x_m)$ by swapping  $x_\ell$ and $x_i$, and $S_i := S\cup \{\ell\} \setminus \{i\}$.
\end{itemize}
}

We divide the proof into  two sections, the ``if'', and the ``only if'' parts. Firstly, we prove the following lemma which will be useful in both directions. Recall that $f^{=k} := \sum_{S \subseteq [m] \atop |S|=k} F_S$.

\begin{lemma} \label{lemmma:integration-on-k-level-equal-zero}
Given any fixed $1 \le k \le m$, assume that  \cref{thm:mainthm}~(ii) and (iii) hold for all $F_S$ such that $|S|=k$. Then for all $\alpha$-partitions $A_1,\ldots,A_m$, we have
\[
\int_{A_{[m]}} f^{=k} = 0.
\]
\end{lemma}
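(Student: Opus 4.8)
The plan is to fix an $\alpha$-partition $A_1,\ldots,A_m$ and analyze the quantity $\int_{A_{[m]}} f^{=k}$ by expanding $f^{=k} = \sum_{|S|=k} F_S$ and examining the contribution of each $F_S$. Since each $F_S$ is a Walsh function, $\int_{[0,1]} F_S(x)\,dx_i = 0$ for every $i \in S$, which forces many of the product integrals to vanish; I would first record that $\int_{A_{[m]}} F_S = \left(\prod_{j \notin S}\alpha_j\right) \int_{A_S} F_S$, so only the factor over coordinates in $S$ matters. The natural idea is then to compare the integral of $F_S$ over $A_S$ with a ``symmetrized'' version, using the alternating property from (ii) and the recursive formula (iii). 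Concretely, relation (iii) with a fixed reference index $\ell$ expresses $\frac{1}{\alpha(S)}F_S(x)$ as a sum of $\frac{1}{\alpha(S_i)}F_{S_i}$ evaluated at points obtained by swapping coordinate $\ell$ with coordinate $i$; integrating this identity over the appropriate product of $A$'s should let me telescope the level-$k$ contributions against each other.

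The cleanest route is probably as follows. Group the sets $S$ of size $k$ by whether they contain a fixed element, or better, use the first-variation machinery of Section~\ref{section:derivative} directly: by Lemma~\ref{Lemma:k-Operators}, applying $k$ of the $\partial$-operators to $F_S$ produces a constant when $|S|=k$ and zero when $|S|<k$, and applying more than $k$ operators kills $F_S$ entirely. So I would consider $\int_{A_{[m]}} f^{=k}$ and show it equals (up to a nonzero constant) a suitable iterated derivative $\left.\frac{d^k}{dt^k}\right|_{0^+}$-type expression, or alternatively show $\partial^{[m]}_{\Y,\Z}\big(\int_{A_{[m]}}f^{=k}\big)$-style manipulations reduce it to an alternating constant. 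Then the alternating property (ii) forces that constant to be zero: a constant that changes sign under transposition of two coordinates must be $0$. For the $k=1$ case, which (ii) does not cover, I would instead invoke Remark~\ref{rem:one}'s consequence of (iii) — that $F_{\{i\}}(x) = \alpha_i g(x)$ for a common $g$ with $\int_0^1 g = 0$ — and compute $\int_{A_{[m]}} f^{=1} = \sum_i \alpha_i \left(\prod_{j\neq i}\alpha_j\right)\int_{A_i} g = \alpha(\lbrack m\rbrack)\sum_i \int_{A_i} g = \alpha(\lbrack m\rbrack)\int_{[0,1]} g = 0$, since the $A_i$ partition $[0,1]$. Here $\sum_i\alpha_i = 1$ is used.

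For $k\geq 2$ the key computation is to exploit that $A_1,\ldots,A_m$ is a \emph{partition}: this is what makes the telescoping close up. I expect the main obstacle to be bookkeeping the combinatorial sum when I substitute (iii) into $\int_{A_S}F_S$ and re-index over all size-$k$ sets simultaneously — one must check that every term produced by the swap $x_\ell \leftrightarrow x_i$ is matched, with the correct sign coming from the alternating property, by a term arising from a different set $S'$ of the same size, so that after summing over all $|S|=k$ the whole expression is seen to equal its own negative (hence zero). A secondary technical point is that (ii) only gives the alternating property for $|S|\geq 2$ and (iii) only for $1\leq |S|\leq m-1$, so the endpoint case $k=m$ (where there is no room for an external index $\ell$) must be handled using (ii) alone: there $f^{=m}=F_{[m]}$ is alternating, and $\int_{A_{[m]}}F_{[m]}$ integrated against a partition into sets of \emph{distinct} measures $\alpha_1,\ldots,\alpha_m$ — wait, the measures need not be distinct — so instead one symmetrizes over permutations $\sigma\in S_m$ of the blocks and uses that an alternating function integrated against a symmetric domain vanishes. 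I would close the argument by combining these cases into the single statement $\int_{A_{[m]}}f^{=k}=0$ for all $1\le k\le m$.
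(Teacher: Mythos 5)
Your $k=1$ computation is correct and is essentially the paper's argument specialized to singletons, and the preliminary reduction $\int_{A_{[m]}}F_S=\bigl(\prod_{j\notin S}\alpha_j\bigr)\int_{A_S}F_S$ is fine. But for $k\ge 2$ --- the heart of the lemma --- you never carry out the cancellation: you only state that you expect the telescoping to ``close up'' after substituting (iii) and re-indexing, and that is exactly the step that has to be exhibited. The missing ingredient is how the partition hypothesis enters. For each $S$ with $m\in S$ and $|S|=k$, use $\sum_{i=1}^m\int_{A_i}F_S\,dx_m=\int_0^1 F_S\,dx_m=0$ (\cref{dfn:generalizedwalsh}~(ii)) to write $\int_{A_{[m]}}F_S=-\sum_{i=1}^{m-1}\int_{A_{[m-1]}}\int_{A_i}F_S\,dx_m\,dx_{[m-1]}$; the terms with $i\in S$ die by the alternating property (the integration domain in the two coordinates $i,m\in S$ is the symmetric set $A_i\times A_i$); and the surviving terms, after dividing by $\alpha(S)$, match term-by-term the contributions $\frac{1}{\alpha(S')}\int_{A_{S'}}F_{S'}$ coming from the sets $S'\subseteq[m-1]$ of size $k$, via identity (iii) with $\ell=m$. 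Without this pairing (or an equivalent explicit computation) the lemma is not proved; ``the expression equals its own negative'' is not what happens --- two different groups of terms cancel against each other.

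Two of the concrete routes you do propose would not work. First, the first-variation machinery (\cref{Lemma:k-Operators}) cannot establish this direction: information about $\partial_{\Y,\Z}f^{=k}$ says nothing about the value of $\int_{A_{[m]}}f^{=k}$ for a fixed partition, and the claim that the resulting ``alternating constant'' must vanish is not meaningful --- a number has no alternating structure. In the paper that machinery serves only the converse (``only if'') implication, where vanishing of the integral for all partitions is the hypothesis. Second, your handling of the endpoint $k=m$ is incorrect as stated: symmetrizing over permutations $\sigma\in S_m$ of the blocks only shows $\sum_\sigma\int_{A_{\sigma(1)}\times\cdots\times A_{\sigma(m)}}F_{[m]}=0$, not that the single ordered integral $\int_{A_1\times\cdots\times A_m}F_{[m]}$ vanishes. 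In fact no separate endpoint case is needed: in the computation sketched above, when $S=[m]$ every $i\in[m-1]$ lies in $S$, so all terms are killed by the alternating property and $\int_{A_{[m]}}F_{[m]}=0$ falls out of the same identity.
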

Note that (ii) is void in the case of $k=1$ and thus holds trivially.
\begin{proof}
 Consider an $\alpha$-partition $A_1, \ldots, A_m$. For the given $k$, for any $S\subseteq [m]$ with $m \in S$ and $|S|=k$, because $\sum_{i=1}^m \int_{A_i}  F_S(x) d x_m = \int_{[0,1]} F_S(x) d x_m = 0$, we have
$$\int_{A_{[m]}} F_S = - \sum_{i=1}^{m-1} \int_{A_i} \left( \int_{A_{[m-1]}} F_S dx_{[m-1]}\right) dx_m.$$
\cref{thm:mainthm}~(ii) says $F_S$ is an alternating function, which implies $\int_{A_i} \int_{A_i} F_S dx_i dx_m = 0$ if $i \in S$. Hence
$$ \int_{A_{[m]}} F_S = - \sum_{i \not\in S} \int_{A_i} \int_{A_{[m-1]}} F_S dx_{[m-1]} dx_m.$$
Consequently
\begin{align*}
& \frac{1}{\alpha([m])} \int_{A_{[m]}} \sum_{S \subseteq [m] \atop |S|=k} F_S dx_{[m]}  =  \sum_{S \subseteq [m-1] \atop |S|=k} \frac{1}{\alpha([m])} \int_{A_{[m]}} F_S dx_{[m]} + \sum_{S \subseteq [m], S \ni m \atop |S|=k} \frac{1}{\alpha([m])} \int_{A_{[m]}} F_S dx_{[m]}
\\
&=  \sum_{S \subseteq [m-1] \atop |S|=k} \frac{1}{\alpha(S)} \int_{A_{S} } F_S dx_S - \sum_{S \subseteq [m], S \ni m \atop |S|=k} \frac{1}{\alpha([m])} \sum_{i \not\in S} \int_{A_i} \int_{A_{[m-1]}} F_S dx_{[m-1]} dx_m
\\
&=  \sum_{S \subseteq [m-1] \atop |S|=k} \frac{1}{\alpha(S)} \int_{A_{S} } F_S dx_S - \sum_{S \subseteq [m], S \ni m \atop |S|=k} \frac{1}{\alpha(S)} \sum_{i \not\in S} \int_{A_i} \int_{A_{S\backslash \{m\}}} F_S dx_{S\backslash \{m\}} dx_m
\\
&=  \sum_{S \subseteq [m-1] \atop |S|=k} \frac{1}{\alpha(S)} \int_{A_{S} } F_S (x) dx_S - \sum_{S \subseteq [m], S \ni m \atop |S|=k} \frac{1}{\alpha(S)} \sum_{i \not\in S} \int_{A_{S\backslash \{m\}} \times A_i} F_S (x) dx_{S\backslash \{m\}} dx_m
\\
&= \sum_{S \subseteq [m-1] \atop |S|=k} \left( \frac{1}{\alpha(S)} \int_{A_{S} } F_S (x) dx_S -  \sum_{i \in S} \frac{1}{\alpha(S_i)} \int_{A_S} F_{S_i} (x^{(i)}) dx_S \right)=0,
\end{align*}
where the last equality uses \cref{thm:mainthm}~(iii), with $S\subseteq [m-1]$ and $\ell=m$.
\end{proof}

\subsection{Proof of \cref{thm:mainthm}, the ``if'' part}
Using \cref{lemmma:integration-on-k-level-equal-zero}, $\int_{A_{[m]}} f^{=k} = 0$ for all $1 \le k \le m$ by (ii) and (iii). Hence (i-iii) imply $\int_{A_{[m]}} f = 0$.

\subsection{Proof of \cref{thm:mainthm}, the ``only if'' part}
For $\vect{y}=(y_1,\ldots,y_m) \in [0,1]^m$ and $\sigma \in S_m$, let $\y_\sigma=(y_{\sigma(1)},\ldots,y_{\sigma(m)}) \in [0,1]^m$ be obtained by rearranging the coordinates of $\y$ according to $\sigma$. Given $\sigma \in S_m$, define $K_\sigma := \{i\in[m] : i \neq \sigma(i)\}$.  Let us denote $\partial_{\y,\sigma}^j := \partial_{\y, \y_\sigma}^j$,   $\partial_{\y,\sigma} := \partial^{K_\sigma}_{\y, \y_\sigma}$ and $\partial_{\Y,\sigma} := \partial^{K_\sigma}_{\Y,\Y_\sigma}$, where  for $\Y=(\y_1,\ldots,\y_k) \in ([0,1]^m)^k$, we have $\Y_\sigma= ((\y_1)_\sigma,\ldots,(\y_k)_\sigma)$.

To prove the theorem we will  use induction on $|S|$ to show that $F_S$ satisfies  \cref{thm:mainthm}~(ii) and (iii)  for all $S$ with $|S| \ge 1$. \cref{thm:mainthm}~(i) then follows from \cref{thm:mainthm}~(ii-iii) and \cref{eqn:F_S-formula}. Let $k \ge 1$, and assume (ii) and (iii) hold for all $F_S$ such that $k+1 \le |S| \le m$. By \cref{lemmma:integration-on-k-level-equal-zero} we have
\begin{equation}
\label{eq:project-down}
\int_{A_{[m]}} f^{\le k}=\int_{A_{[m]}} f = 0,
\end{equation}
for every $\alpha$-partition $A_1,\ldots,A_m$.

Consider an $\alpha$-partition $A_1,\ldots,A_m$, and let  $\Y = (\y_1, \ldots, \y_k)$ where $\y_i = (y_{i1}, \ldots, y_{im}) \in \Int(A_{[m]})$ are generic points for $i=1,\ldots,k$.

Note that for every sufficiently small $t>0$, the sets $A_1(t),\ldots, A_m(t)$ defined as in \cref{eq:At}, with $\y=\y_1$ and $\z=(\y_1)_\sigma$, and any $\sigma \in S_m$ form an $\alpha$-partition of $[0,1]$.   Hence  by \cref{eq:project-down}, we have $F(t) = \int_{A_1(t) \times \cdots \times A_m(t) } f^{\le k} =0$ for sufficiently small $t$. Consequently  $\left. \frac{dF(t)}{dt} \right|_{0^+} =0$ which in turn implies that
$\int_{A[m]} \partial_{\y_1,\sigma } f^{\le k}=0$. Replacing $f^{\le k}$ with $\partial_{\y_1,\sigma}  f^{\le k}$ and repeating the above argument we conclude that
\begin{equation}
\label{eq:derivatives-zero}
\int_{A_{[m]}} \partial_{\Y,\sigma }f^{\le k}= \int_{A_{[m]}} \partial_{\y_k,\sigma} \cdots \partial_{\y_1,\sigma }f^{\le k}=0,
\end{equation}
for every $\sigma \in S_m$, every $\alpha$-partition $A_1,\ldots,A_m$, and almost every set of points $\y_1,\ldots,\y_k \in \Int(A_{[m]})$.

\noindent {\bf  \cref{thm:mainthm}~(ii):} Let $S \subseteq [m]$ be of size $k \ge 2$. Without loss of generality assume that $S=[k]$.  Setting $\sigma=(1\ 2\ \cdots \ k) \in S_m$, we have $K_\sigma = [k] = S$. By \cref{Lemma:k-Operators}, \cref{eq:derivatives-zero} simplifies to
$$
0 =\int_{A_{[m]}}\partial_{\Y,\sigma}f^{\le k}  =  \int_{A_{[m]}} \partial_{\Y,\sigma} F_S = \frac{\alpha([m])}{\alpha([k])} \sum_{\pi:[k] \xrightarrow{1:1} [k]}\sum_{B \subseteq  [k]} (-1)^{|B|} F_S(\w),
$$
where  $\w =\w_{B,\pi} \in [0,1]^{[k]}$ is defined as
$$\w(t) = \left\{
\begin{array}{lcl}
y_{\pi(t)t}, & \qquad &  \pi(t) \in B, \\
y_{\pi(t)\sigma(t)}, & & \pi(t) \in [k] \setminus B.
\end{array} \right.
$$
Hence, for almost every $\y_1,\ldots,\y_k \in [0,1]^m$, we have
\begin{equation}
\label{eq:diffF_T}
\sum_{\pi:[k] \xrightarrow{1:1} [k]}\sum_{B \subseteq  [k]} (-1)^{|B|} F_S (\w) = 0.
\end{equation}
Let us fix the $k$ entries $\{y_{12}, y_{22}, y_{33}, \ldots, y_{kk}\}$ among $m\times k$ entries in $\vect{Y}$. We claim that in (\ref{eq:diffF_T}), there are only two terms containing these $k$ entries simultaneously: $F_S (y_{12}, y_{22}, y_{33}, \ldots, y_{kk})$ and $F_S (y_{22}, y_{12}, y_{33}, \ldots, y_{kk})$, corresponding respectively to $(\pi(1),\ldots,\pi(k)):=(1,\ldots,k), B=[k]\backslash \{1\}$ and $(\pi(1),\pi(2),\ldots,\pi(k)):=(2,1,3,4\ldots,k), B=[k]\backslash \{2\}$. In particular, the cardinalities of these two $B$s are the same, hence these two terms are of the same sign.

To see the claim, observe that $2$ appears twice as the column index in the $k$ entries $\{y_{12}, y_{22}, y_{33},  \ldots, y_{kk}\}$, and hence by the definition of $\w(t)$, we must have either $\w(1)=y_{12}, \w(2)=y_{22}$ or $\w(1)=y_{22}, \w(2)=y_{12}$. It is then easy to see that, by our choice of $\sigma=(1\ 2\ \cdots \ k)$, the values for the remaining entries of $\w(t)$ are uniquely determined as $\w(t)=y_{tt}, 3\le t \le k$. The permutation $\pi$ and the set $B$ are then determined accordingly.

Thus by \cref{dfn:generalizedwalsh}~(ii), integrating \cref{eq:diffF_T} over all the variables $\{y_{ij}: i \in [m], j \in [k]\} \setminus \{y_{12}, y_{22}, y_{33}, \ldots, y_{kk}\}$ we get
$$F_S (y_{12}, y_{22}, y_{33}, \ldots, y_{kk}) + F_S (y_{22}, y_{12}, y_{33}, \ldots, y_{kk})=0.$$
This shows that $F_S$ is an alternating function with respect to the first two coordinates. The condition with respect to the other coordinates can be shown similarly.

\noindent {\bf \cref{thm:mainthm}~(iii):}
It remains to show that $F_S$ satisfies \cref{thm:mainthm}~(iii).  By symmetry it suffices to prove the statement for $\ell=m$. Again without loss of generality assume $S=[k]$, and now let $\rho=(1,2,\ldots,k,m) \in S_m$. Since $K_\rho=\{1,\ldots,k\} \cup \{m\}$, denoting $S_0:=S$ and defining $S_1,\ldots,S_k$ as in \cref{thm:mainthm}~(iii),  \cref{eq:derivatives-zero} reduces to
\begin{eqnarray*}
0 &=&\int_{A_{[m]}}\partial_{\Y,\rho}f^{\le k}  =  \int_{A_{[m]}} \partial_{\Y,\rho} \sum_{i=0}^kF_{S_i} \nonumber \\
&=& \alpha([m]) \sum_{i=0}^k \sum_{\pi:S_i \xrightarrow{1:1} [k]}\sum_{B \subseteq  [k]} \frac{1}{\alpha(S_i)} (-1)^{|B|} F_{S_i}(\w),
\end{eqnarray*}
where  $\w =\w_{i,B,\pi} \in [0,1]^{S_i}$ is defined as
$$\w(t) = \left\{
\begin{array}{lcl}
y_{\pi(t)t}, & \qquad &  \pi(t) \in B, \\
y_{\pi(t)\rho(t)}, & & \pi(t) \in S_i \setminus B.
\end{array} \right.
$$
Hence for almost every $\y_1,\ldots,\y_k \in [0,1]^m$, we have
\begin{equation}
\label{eq:diffF_T_iv}
\sum_{i=0}^k \sum_{\pi:S_i \xrightarrow{1:1} [k]}\sum_{B \subseteq  [k]}  \frac{1}{\alpha(S_i)} (-1)^{|B|} F_{S_i}(\w) = 0,
\end{equation}
where  $\w =\w_{i,B,\pi} \in [0,1]^{S_i}$ is as above.

This time we fix the $k$ variables $ y_{1 1}, y_{2 2}, \ldots, y_{k k} $ among the $k\times m$  entries of $\vect{Y}$. In \cref{eq:diffF_T_iv}, using the definition of $\rho$ and a similar argument as for the previous claim, those terms containing exactly these $k$ points as their coordinates are as follows:
\begin{itemize}
\item The term: $(-1)^k F_{S_0}(y_{11}, y_{22}, \ldots, y_{kk})$, corresponding to $(\pi(1),\ldots,\pi(k))=(1,\ldots,k)$ and $B=[k]$.
\item For each $1 \le i \le k$, there is  one such term: $(-1)^{k-i} F_{S_i}(\w)$ with
$$\w(j)=
\left\{
\begin{array}{lcl}
y_{j+1,j+1}, & & 1 \le j \le i-1, \\
y_{jj}, &\qquad  & i+1 \le j \le k, \\
y_{11}, & & j=m,
\end{array}
\right.
$$
for $j \in S_i$, corresponding to $B=\{i+1,\ldots,k\}$, and $\pi$ defined as $\pi(m)=1$, $\pi(j)=j+1$ for $1\le j \le i-1$, and $\pi(j)=j$ for $i+1 \le j \le k$. Since $F_{S_i}$ is an alternating function,
$$(-1)^{k-i} F_{S_i}(\w)=(-1)^{k-i} (-1)^{i-1} F_{S_i}(\w') = (-1)^{k-1} F_{S_i}(\w'),$$
where for $j \in S_i$, $\w'$ is defined as
$$\w'(j)=
\left\{
\begin{array}{lcl}
y_{jj}, & & j \neq m, \\
y_{ii}, &\qquad  & j =m.
\end{array}
\right.
$$
\end{itemize}

Hence fixing $y_{11},\ldots,y_{kk}$ and integrating with respect to the other $(m-1)k$ entries of $\Y$, by \cref{dfn:generalizedwalsh}~(ii), \cref{eq:diffF_T_iv} reduces to \cref{thm:mainthm}~(iii).

\section{Proof of \Cref{thm:symmetric}}
In this section we will prove \Cref{thm:symmetric}.

\restate{\Cref{thm:symmetric}}{
Let $\alpha\in (0,1)$, and $0 \le r \le m$ be an integer. A symmetric integrable function $f:[0,1]^m \rightarrow \C$ satisfies $\int_{A^{m-r} \times (\overline{A})^{r}} f =0$ for every $A\subset [0,1]$ with $\lambda(A)=\alpha$ if and only if at least one of the following two cases holds.
\begin{itemize}
\item[(i)] $f=0$ almost everywhere.
\item[(ii)] For $K := K(m,r,\alpha) = \{k \in [m]: \sum_{i=0}^k {m-r \choose k-i}
{r \choose i}   \left(\frac{-\alpha}{1-\alpha}\right)^{i}= 0\}$, we have
$$
f(x_1,\ldots,x_m)= \sum_{k \in K} \sum_{S\subseteq [m], |S|=k} g_k(x_S)
$$
where $g_k:[0,1]^k \to \C$ are symmetric functions satisfying $\int g_k(x_1,\ldots,x_k) dx_i = 0$ for every $i \in [k]$.
\end{itemize}
}

Since $f$ is symmetric, the  Walsh expansion $f=\sum_{S \subseteq [m]} F_S$ has the following structure. Every $F_S$ is symmetric with respect to the coordinates in $S$, and furthermore for every $0 \le k \le m$ and every $S \subseteq [m]$ with $|S|=k$, we have
$$F_S(a_1,\ldots,a_k) = F_{[k]}(a_1,\ldots,a_k).$$
Note that
$$
\int_{A^{m-r} \times (\overline{A})^r} F_S = (-1)^{|S\cap \{m-r+1,\ldots,m\}|} \left(\frac{1-\alpha}{\alpha}\right)^{r-|S\cap \{m-r+1,\ldots,m\}|}\int_{A^m} F_S.
$$
We conclude that
$$
\int_{A^{m-r} \times (\overline{A})^r} f =  \left(\frac{1-\alpha}{\alpha}\right)^r \sum_{k=0}^{m} \sum_{i=0}^k {m-r \choose k-i}
{r \choose i}   \left(\frac{-\alpha}{1-\alpha}\right)^{i} \int_{A^m} F_{[k]}.
$$
This verifies the ``if'' part of \Cref{thm:symmetric}.  It remains to prove the ``only if'' part.

Let
$$F=:  \sum_{k=0}^{m}\left(\sum_{i=0}^k {m-r \choose k-i}
{r \choose i}   \left(\frac{-\alpha}{1-\alpha}\right)^{i} \right) F_{[k]}.$$
Under the assumption of the theorem, we have $\int_{A^m} F = 0$ for every  $A \subseteq [0,1]$ with $\lambda(A)=\alpha$. Now similar  to \Cref{section:derivative} and the proof of \cref{thm:mainthm}, we use the fact that the integral needs to remain $0$ under small modifications of $A$ that do not change its measure.

Fix $A\subset [0,1]$ with $\lambda(A)=\alpha$ and nonempty interior and exterior.  In order to use the notation of \Cref{section:derivative}, define $A_1:=\cdots:=A_{m}:=A$.    Consider $\a=(a_1,\ldots,a_k) \in \Int(A)^k$ and $b=(b_1,\ldots,b_k) \in \Int(\overline{A})^k$, and let $\y_i:=(a_i,\ldots,a_i) \in [0,1]^m$ and $\z_i:=(b_i,\ldots,b_i) \in [0,1]^m$.  We conclude that for almost every $a_1,\ldots,a_k \in \Int(A)$ and $b_1,\ldots,b_k \in \Int(\overline{A})$, we have
\begin{equation}
\label{eq:sym_derivZero}
\int_{A_{[m]}} \partial_{\y_k, \z_k}  \cdots  \partial_{\y_1, \z_1} F = 0.
\end{equation}

\begin{claim}
We have
$$
\partial_{\y_k, \z_k}  \cdots  \partial_{\y_1, \z_1}  F_{[k]} = \alpha^{-k} k! \sum_{B \subseteq  [k]}  (-1)^{|B|}  F_{[k]}(\a_B,\b_{[k] \setminus B}).
$$
Furthermore,
$\partial_{\y_k, \z_k}  \cdots  \partial_{\y_1, \z_1} F_{[\ell]} = 0$ if $\ell <k$.
\end{claim}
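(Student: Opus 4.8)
The plan is to apply Lemma~\ref{Lemma:k-Operators} with $S = [\ell]$ and $K = [m]$, and with the particular choice of points $\Y = (\y_1,\ldots,\y_k)$ and $\Z = (\z_1,\ldots,\z_k)$ where $\y_i = (a_i,\ldots,a_i)$ and $\z_i = (b_i,\ldots,b_i)$ are constant vectors. Since $K = [m]$, we have $S \cap K = [\ell]$, so the outer sum in Lemma~\ref{Lemma:k-Operators} runs over subsets $D \subseteq [\ell]$ with $|D| = k$. If $\ell < k$ there are no such $D$, which immediately gives the second assertion $\partial_{\y_k,\z_k}\cdots\partial_{\y_1,\z_1} F_{[\ell]} = 0$.

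For the first assertion, take $\ell = k$, so the only choice is $D = [k]$ and $\prod_{i \in D}\alpha_i = \alpha^k$ (all $\alpha_i$ equal $\alpha$ here). The middle sum runs over bijections $\pi : [k] \to [k]$, and the inner sum over $B \subseteq [k]$ contributes $(-1)^{|B|} F_{[k]}(\w_{B,\pi},\cdot)$, where $\w_{B,\pi}(t) = \y_{\pi(t)}(t)$ if $\pi(t) \in B$ and $\w_{B,\pi}(t) = \z_{\pi(t)}(t)$ otherwise. The key simplification is that because $\y_i$ and $\z_i$ are \emph{constant} vectors, $\y_{\pi(t)}(t) = a_{\pi(t)}$ and $\z_{\pi(t)}(t) = b_{\pi(t)}$ depend on $\pi(t)$ only (not on the coordinate index $t$). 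So for a fixed $\pi$ and $B$, the argument $\w_{B,\pi}$ is, up to reordering of its $k$ coordinates, the tuple consisting of $a_{\pi(t)}$ for those $t$ with $\pi(t) \in B$ and $b_{\pi(t)}$ for the rest. Reindexing by $s = \pi(t)$: the multiset of entries is $\{a_s : s \in B\} \cup \{b_s : s \notin B\}$. Now I invoke the symmetry of $F_{[k]}$ (which holds since $f$ is symmetric, as recalled just before the claim): $F_{[k]}$ is invariant under permutation of its coordinates, so $F_{[k]}(\w_{B,\pi}) = F_{[k]}(\a_B, \b_{[k]\setminus B})$ for every bijection $\pi$. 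Hence the sum over the $k!$ bijections $\pi$ just multiplies by $k!$, and we obtain
$$
\partial_{\y_k,\z_k}\cdots\partial_{\y_1,\z_1} F_{[k]} = \alpha^{-k} \, k! \sum_{B \subseteq [k]} (-1)^{|B|} F_{[k]}(\a_B, \b_{[k]\setminus B}),
$$
as claimed.

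I expect the only delicate point to be bookkeeping the reindexing $t \mapsto \pi(t) = s$ carefully and confirming that after this substitution the sign $(-1)^{|B|}$ is unchanged (it is, since $|B| = |\pi^{-1}(B)|$ is what actually appears, but these have the same cardinality) and that the resulting argument is genuinely a permutation of $(\a_B, \b_{[k]\setminus B})$ so that symmetry of $F_{[k]}$ applies. Everything else is a direct substitution into Lemma~\ref{Lemma:k-Operators}. No new ideas beyond that lemma and the symmetry structure of the Walsh decomposition of a symmetric $f$ are needed.
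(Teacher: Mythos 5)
Your proposal is correct and follows essentially the same route as the paper: both apply \cref{Lemma:k-Operators} with $K=[m]$ (so the $\ell<k$ case dies for lack of a size-$k$ subset $D\subseteq[\ell]$), and both use the constancy of the vectors $\y_i,\z_i$ together with the symmetry of $F_{[k]}$ to collapse the sum over the $k!$ bijections $\pi$ into the factor $k!$. No gaps; the bookkeeping point you flag about reindexing and the sign $(-1)^{|B|}$ is handled exactly as in the paper's proof.
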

\begin{proof}
The claim is an easy consequence of \cref{Lemma:k-Operators}. By this lemma,   $\partial_{\y_k, \z_k}  \cdots  \partial_{\y_1, \z_1}  F_T =0$ if $|T|<k$, and moreover
\begin{equation}
\label{eq:partialSymWalsh}
\partial_{\y_k, \z_k}  \cdots  \partial_{\y_1, \z_1} F_{[k]}= \frac{1}{\alpha^k} \sum_{\pi:[k] \xrightarrow{1:1} [k]}\sum_{B \subseteq  [k]} (-1)^{|B|} F_{[k]}(\w),
\end{equation}
where $\w =\w_{B,\pi} \in [0,1]^{k}$ is defined as
$$\w(t) = \left\{
\begin{array}{lcl}
a_{\pi(t)}  & \qquad &  \pi(t) \in B, \\
b_{\pi(t)} & & \pi(t) \in [k] \setminus B.
\end{array} \right.
$$
which by the symmetry of $F_{[k]}$ simplifies to the desired
$$
\alpha^{-k} \sum_{B \subseteq  [k]}  (-1)^{|B|} k! F_{[k]}(\a_B,\b_{[k] \setminus B}).
$$
\end{proof}

Note that in particular we have
\begin{equation}
\label{eq:mainsym}
\int_{[0,1]^k} \left(\partial_{\y_k,\z_k}  \cdots  \partial_{\y_1,\z_1}    F_{[k]}\right) da_1 \cdots da_k= \alpha^{-k} k! F_{[k]}(\b).
\end{equation}

Suppose for the sake of contradiction that the statement of the theorem is not true. Then there exists a largest  $k \in  [m] \setminus K(m,r,\alpha)$  such that $F_{[k]}$ is not zero almost everywhere. By \cref{eq:sym_derivZero} and \cref{eq:mainsym} we have
$$\alpha^{m-k} k!  \left(\sum_{i=0}^k {m-r \choose k-i}
{r \choose i}   \left(\frac{-\alpha}{1-\alpha}\right)^{i}\right)  F_{[k]}(\b) = 0,$$
for almost all $\b$, which then implies that $\sum_{i=0}^k {m-r \choose k-i}
{r \choose i}   \left(\frac{-\alpha}{1-\alpha}\right)^{i}=0$ as  $F_{[k]}$ is not zero almost everywhere. But this means that $k \in  K(m,r,\alpha)$, which is a contradiction.

\section{Concluding Remarks}

One of the main problems studied in the paper of Janson and S\'os~\cite{JansonSos} is determining  for which $(F,\alpha_1,\ldots,\alpha_m)$, the property $\mathcal{P}(F,\alpha_1,\ldots,\alpha_m)$  is always (i.e. for every $p \in (0,1]$) a \emph{quasi-random property}.  The only known  example for which this is \emph{not} the case is $\mathcal{P}(K_2,\frac{1}{2},\frac{1}{2})$. This fact was already observed by Chung and Graham in~\cite{MR1166604}. In the same paper, they also showed that $\mathcal{P}(K_2,\alpha,1-\alpha)$ is a quasi-random property for every $\alpha \in (0,1) \setminus \{\frac{1}{2}\}$.

\begin{conjecture}[{See {\cite[Conjecture 2.13 and Problem 2.16]{JansonSos}}}]
\label{conj:characterizeF}
Let $F \neq K_2$ be a non-empty graph with vertex set $\{1,\ldots,m\}$, and let $\alpha_1,\ldots,\alpha_m \in (0,1)$ satisfy $\sum_{i=1}^m \alpha_i \le 1$.  Then $\mathcal{P}(F,\alpha_1,\ldots,\alpha_m)$ is a quasi-random property for every $p \in (0,1]$.
\end{conjecture}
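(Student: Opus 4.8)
The plan is to peel off the cases already accessible and isolate the genuinely open core. By \cref{cor:quasiprop}~(ii), $\mathcal P(F,\alpha_1,\ldots,\alpha_m)$ is a quasi-random property whenever $\sum_i\alpha_i<1$, so we may assume $\sum_i\alpha_i=1$. By \cref{thm:twins} it is also quasi-random whenever $F$ has twins, so we may assume $F$ is twin-free; then $F$ has at most one isolated vertex (two isolated vertices would be twins), and if it has one, say $v$, then for any $W$ satisfying the defining identity the variable $x_v$ is idle, and dividing by $\alpha_v$ turns the identity into the defining identity of $\mathcal P(F-v,(\alpha_i)_{i\neq v})$ with $\sum_{i\neq v}\alpha_i<1$, whence $W=p$ by \cref{cor:quasiprop}~(ii) again. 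Thus it suffices to treat $F$ twin-free with minimum degree at least $1$ (so $|V(F)|\ge 3$, since $F\neq K_2$) and $\sum_i\alpha_i=1$, and I expect the uniform case $\alpha=(1/m,\ldots,1/m)$ to carry the essential difficulty, so I would concentrate there.

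\textbf{The structure theorem.} Apply \cref{thm:mainthm} to $f:=\prod_{(i,j)\in E(F)}W(x_i,x_j)-p^{|E(F)|}$, whose hypothesis is precisely the defining identity. In the uniform case the conclusion says that the Walsh expansion $f=\sum_{S\subseteq[m]}F_S$ satisfies $F_\emptyset=0$ (equivalently $t(F,W)=p^{|E(F)|}$), that $F_S$ is alternating in the coordinates of $S$ for every $|S|\ge 2$, and that $F_S(x)=\sum_{i\in S}F_{S_i}(x^{(i)})$ for $1\le|S|\le m-1$; moreover $F_{\{i\}}(x)=\tfrac1m\,g(x_i)$ for a single $g:[0,1]\to\C$ with $\int_0^1 g=0$ (\cref{rem:one}). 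The target is to show that every $F_S$ vanishes, for then $f=0$ and hence $W=p$ almost everywhere. The key point is that $f$ is not an arbitrary function of this shape — by \cref{rem:one} a vast family satisfies (i)--(iii) — but the difference of a \emph{product graphon} and a constant, and it is exactly this extra rigidity that must be converted into $W=p$.

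\textbf{The attack.} I would feed the product structure into \cref{thm:mainthm} via the inclusion–exclusion formula for the Walsh components of $h:=\prod_{(i,j)\in E(F)}W(x_i,x_j)$: each $H_S$ is an alternating combination of homomorphism densities of $F$ with the coordinates of $S$ pinned, so for $|S|\le 2$ the function $F_S$ is an explicit expression in $W$ and its degree function $d(x)=\int_0^1 W(x,y)\,dy$. Imposing the alternating condition on $F_{\{i,j\}}$ for well-chosen pairs $i,j$ (twins force $F_{\{i,j\}}=0$, which is the mechanism of \cref{thm:twins}, and one wants an analogous collapse for all pairs) and then propagating vanishings upward through the recursion \cref{eqn:F_S-formula}, the hope is to conclude $F_S=0$ for all $|S|\ge2$; after that $f=\tfrac1m\sum_i g(x_i)$, and specializing the identity $\prod_{(i,j)\in E(F)}W(x_i,x_j)=p^{|E(F)|}+\tfrac1m\sum_i g(x_i)$ by identifying or integrating out coordinates along the structure of $F$ — together with the endgame idea of \cref{thm:twins}, namely comparing the two ways of collapsing a pair of coordinates and invoking the Cauchy–Schwarz equality condition — should give $g\equiv 0$ and $W=p$. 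As a sanity check that the program is not vacuous: when $F$ is a vertex-disjoint union of edges the integral factors over the components, and comparing the various ways of splitting each pair of parts forces $\int_{B\times C}W$ to be constant over all disjoint $B,C$ of measure $1/m$, so $W=p$ by \cref{cor:quasiprop}~(ii).

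\textbf{Main obstacle.} The crux is $F$ being $2$-edge-connected with poor automorphism structure — e.g.\ $C_5$, $K_4$, $K_{3,3}$ — in the uniform case. Here nothing factors, and because $\sum_i\alpha_i=1$ there is no slack to perturb a single part $A_i$ while fixing the others, which is precisely why the Yuster-type freedom available for $\sum_i\alpha_i<1$ (and used implicitly in \cref{thm:twins}) disappears; by \cref{rem:one} the pair ``alternating $+$ recursion'' is nowhere near forcing $f\equiv0$ on its own, so the proof must genuinely exploit the product structure, and I do not see how to do so without a new ingredient: either an additional structural identity for the Walsh expansion of $\prod_{(i,j)\in E(F)}W(x_i,x_j)$ that the two existing conditions do not capture, or an induction on the block decomposition of $F$ that at each step either surfaces twins (invoking \cref{thm:twins}) or manufactures a usable modification of some $A_i$. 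Making such an induction go through uniformly over all $F\neq K_2$ is exactly the content of the conjecture, and is where I expect the real work to lie.
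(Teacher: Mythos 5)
The statement you are asked about is \cref{conj:characterizeF}, which the paper itself states as an \emph{open conjecture} (imported from Janson--S\'os) and does not prove; the paper only records the known special cases, namely $\sum_i\alpha_i<1$ (\cref{cor:quasiprop}), graphs with twin vertices (\cref{thm:twins}), and the Janson--S\'os cases (uniform $\alpha$ with $F$ regular, a star, or disconnected). Measured against that, your proposal is not a proof, and you say so yourself: the reductions in your first paragraph are correct and essentially reproduce what the paper already establishes (the $\sum_i\alpha_i<1$ case via \cref{cor:quasiprop}~(ii), the twin case via \cref{thm:twins}, and the clean observation that an isolated vertex can be integrated out to land back in the $\sum_i\alpha_i<1$ regime), but the remaining case --- twin-free $F$ with minimum degree at least $1$ and $\sum_i\alpha_i=1$ --- is exactly where your argument stops, with ``I do not see how to do so without a new ingredient.'' That is the entire content of the conjecture, so the gap is not a repairable local step but the absence of the main argument.

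Two smaller points. First, your decision to ``concentrate on the uniform case'' is a heuristic, not a reduction: nothing in your write-up reduces general $(\alpha_1,\ldots,\alpha_m)$ with $\sum_i\alpha_i=1$ to $\alpha_i=1/m$, so even a complete solution of the uniform case would not yield the conjecture as stated. Second, your sanity check (vertex-disjoint unions of edges, uniform $\alpha$) is already covered by \cite[Theorem 2.12]{JansonSos} (disconnected $F$ with an edge), and even there your one-line argument ``comparing the various ways of splitting each pair of parts forces $\int_{B\times C}W$ to be constant'' needs an actual derivation (positivity of the factors, and a chaining argument over partitions) before \cref{cor:quasiprop}~(ii) can be invoked. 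Your diagnosis of the obstruction --- that conditions (i)--(iii) of \cref{thm:mainthm} are far from forcing $f\equiv 0$ by \cref{rem:one}, so the product structure of $\prod_{(i,j)\in E(F)}W(x_i,x_j)$ must be exploited in a way not yet available --- agrees with the paper's concluding remarks, but it is a statement of the difficulty, not a resolution of it.
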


When $\sum_{i=1}^m \alpha_i<1$, \cref{conj:characterizeF} is verified in \cref{cor:quasiprop} (originally proved in \cite[Theorem 2.11]{JansonSos}). The case where $\alpha_1=\cdots=\alpha_m=\frac{1}{m}$ and $F$ is a regular graph, a star, or a disconnected graph  with at least one edge is verified by Janson and S\'os in \cite[Theorem 2.12]{JansonSos}.

Our \cref{thm:twins} settles the case when $\alpha_i$ are arbitrary and $F$ contains twin vertices. Prior to our work this was unknown even for the path on $3$ vertices and was stated as an open problem in \cite[Problem 2.19]{JansonSos}.

\section*{Acknowledgements}
The authors are grateful to Svante Janson for his invaluable comments and corrections to an earlier version of the paper. The second author would like to thank Alexander Razborov and Madhur Tulsiani for their helpful comments on the presentation of our main result.

\bibliographystyle{amsalpha}
\bibliography{HHLref}
\end{document}